	\newcommand{\F}{{\sf F}}
	\newcommand{\HH}{{\sf H}}
	\newcommand{\Pp}{\mathbb{P}}
	\newcommand{\Oo}{\mathscr{O}}
	\newcommand{\Ff}{\mathscr{F}}
	\newcommand{\Gg}{\mathscr{G}}
	\newcommand{\Ee}{\mathcal{E}}	
	\newcommand{\Ll}{\mathscr{L}}	
	\newcommand{\Zz}{\mathbb{Z}}	
	\newcommand{\Ss}{{\sf S}}
	\DeclareMathOperator{\Coker}{Coker}
	\DeclareMathOperator{\chara}{char}
	\DeclareMathOperator{\Hom}{Hom}
	\DeclareMathOperator{\Ext}{Ext}
	\newcommand{\isom}{\simeq}
	\newcommand{\xto}{\xrightarrow}
	\newcommand{\dual}{\vee}
	\renewcommand{\phi}{\varphi}
	\renewcommand{\epsilon}{\varepsilon}
	\renewcommand{\tilde}{\widetilde}
	\newcommand{\Ot}{\bigoplus_{t\in\mathbb{Z}}}
	\newcommand{\St}{\sum_{t\in\mathbb{Z}}}
	\newcommand{\Omg}{{\Omega^1_{\Pp^N}}}
	\newcommand{\MM}{{2^{\lfloor n/2 \rfloor+1}}}
	\newcommand{\sizephin}{{2^{\lfloor (n+1)/2\rfloor}}}
	\newcommand{\sizePhin}{\MM} %{2^{\lfloor (n+3)/2\rfloor}}}
	\newcommand{\ghostlf}{\hspace{1cm}}
	\newcommand{\dn}{\frac{1}{2}n(p-1)}
	\newcommand{\pscirclebox}[1]{\subsection*{Step #1}}
	\theoremstyle{plain}
	\newtheorem{lemma}{Lemma}[section]
	\newtheorem*{langerlemma}{Langer's Lemma}
	\newtheorem{corollary}[lemma]{Corollary}
	\newtheorem{proposition}[lemma]{Proposition}
	\newtheorem*{proposition*}{Proposition}
	\newtheorem*{corollary*}{Corollary}
	\newtheorem{theorem}{Theorem}
	\newtheorem*{theorem*}{Theorem}
	\theoremstyle{definition}
	\newtheorem{definition}[lemma]{Definition}
	\theoremstyle{remark}
	\newtheorem*{remark}{Remark}
	\author{Piotr Achinger}
	\title{Frobenius Push-Forwards on Quadrics}
	\numberwithin{equation}{section}
\begin{document}

\maketitle

\begin{abstract}
We generalize, explain and simplify
Langer’s results concerning Frobenius direct images of line bundles 
on quadrics, describing explicitly the decompositions of higher Frobenius
push-forwards of arithmetically Cohen-Macaulay bundles into 
indecomposables, with an
additional emphasis on the case of characteristic two.
These results are applied to check which Frobenius push-forwards of
the structure sheaf are tilting. % objects in the derived category.
\end{abstract}

\section*{Introduction}

In \cite{AL}, A. Langer computed the Frobenius push-forwards of 
line bundles on quadrics. 
However, the computations worked only for odd
characteristic and explicit formulas for the push-forward were given
only for the first Frobenius direct image. In this paper, we determine
the push-forwards of line and spinor bundles on smooth quadrics in 
arbitrary positive characteristic. But mostly, we explain and simplify
the aforementioned paper, reproving nearly all of the statements.

To illustrate our method, we briefly show how it can be used to determine
Frobenius push-forwards of line bundles on a projective space $\Pp^N$ 
(this method is used in \cite{R}, Lemma 2.1). If the absolute Frobenius 
morphism on $\Pp^N$ is denoted by $\F$, its $s$-th composition by $\F^s$, 
the push-forward in question can be written as
\[ 
	\F^s_* (\Oo(a)) = \Ot \Oo(t)^{\alpha^s(t, a)} 
\]
for some integers $\alpha^s(t, a)$ (the existence of such a decomposition
follows directly from Horrocks' splitting criterion and the projection 
formula). 

To compute $\alpha^s(t, a)$, let us write the projection formula
using the bundle $\Omg(-b)$:
\[
	\F^s_*(\F^{s*} \Omg(a-bq)) = \F^s_*(\Oo(a))\otimes \Omg(-b), 
\]
so comparing dimensions of the cohomology groups we get
\[ 
	h^1(\F^{s*} \Omg(a-bq)) 
	  = \St \alpha^s(t,a) \cdot h^1(\Omg(t-b)). 
\]
But $h^1(\Omg(t-b))=\delta_{t,b}$, so the right hand side is just 
$\alpha^s(b,a)$.

On the other hand, the dimension of $\HH^1(\Pp^N, \F^{s*}\Omg(t))$ can be
computed as
\[ 
	\dim\left( 
	  \underbrace{
	    k[x_0,\ldots, x_N] / (x_0^q, \ldots, x_N^q)
	  }_{D^{(s)}}
	\right)_{\!\!\!\displaystyle t} 
	= \sum_{j=0}^{N+1} (-1)^j \binom{N+1}{j} \binom{N+t-jq}{N}
\]
(see Lemma \ref{lemma:bm}). Hence we obtain
\[ 
	\alpha^s(t, a) 
	  = \dim D^{(s)}_{a-tq} 
	  = \sum_{j=0}^{N+1} (-1)^j \binom{N+1}{j} \binom{N+a-tq-jq}{N}. 
\]
%\[ *\quad *\quad * \]

On quadrics, the situation is quite similar. It is well known 
that any ACM (\emph{arithmetically Cohen-Macaulay}, i.e., with vanishing 
$h^i(\Ee(t))$ for $0 < i < n$) bundle on a smooth $n$-dimensional quadric 
decomposes into a direct sum of line bundles and twisted spinor 
bundles. We use the above method to compute the coefficients in this 
decomposition. The result (see Theorem \ref{theorem:fofs}) is that
\[ 
	\F^s_* (\Oo(a)) 
	  = \Ot \Oo(t)^{\beta^s(t, a)} \oplus \Ot \Ss(t)^{\gamma^s(t, a)}, 
\]
where $\Ss$ is the spinor bundle or the sum of the two half-spin bundles 
on $Q_n$ (see Section \ref{section:spinor}) and the coefficients $\beta$ and
$\gamma$ are given by the formulas
\begin{align*}
	\beta^s(t, a) &= \dim C^{(s)}_{a-tq} \, , \\
	\gamma^s(t, a) &= \frac{1}{\MM} \dim M^{(s)}_{a-(t-1)q} \, , 
\end{align*}
where $C^{(s)}$, and $M^{(s)}$ are certain graded modules defined in Section 
\ref{section:algebras}. The decomposition of $\F^s_* (\Ss(a))$ is also given.
This description allows us to give explicit 
vanishing criteria for these coefficients (Theorems \ref{theorem:van} and 
\ref{theorem:van2}), from which we easily derive corollaries concerning 
the push-forwards being tilting (Theorem \ref{theorem:tilting}). The last
section of the paper contains a comment on possible extension of these
results to singular quadrics.

In particular, for $p=2$ the formulas become easier and we can be a little bit
more explicit. We extend the main theorems of \cite{AL} to this
case. 

The paper \cite{AL} was inspired by Samokhin's paper \cite{S1}.
Frobenius direct images of the structure sheaf are of particular interest
because they can produce tilting bundles and allow us to study 
$\mathscr{D}$-affinity in positive characteristic
(\cite{S1}, \cite{AL}, \cite{S2}).

\medskip
{\bf Acknowledgements.} I would like to thank Prof. Adrian Langer for
giving me the idea for writing this paper and for many helpful clues. 

\tableofcontents

\section{Preliminaries} 
\label{section:pre}

% -------------------------------------------------------------------------- %

\subsection[The Frobenius morphism and some projection formulas]
           {The Frobenius morphism and some projection formulas}
\label{section:frobenius}

Let $X$ be a projective variety over an algebraically closed field $k$ of 
characteristic $p>0$. The absolute Frobenius morphism $\F: X\to X$ is the 
mapping of schemes acting as identity on the underlying topological space 
and as the $p$-th power map on the structure sheaf. It is not a map of 
$k$-schemes. Denote by $\F^s$ the $s$-th composition of the Frobenius
morphism and set $q = p^s$ once and for all.

Let $\Ff$ be a locally free sheaf of rank $r$ on $X$. If $X$ is smooth 
then $\F$ is flat and the sheaf $\F^s_* \Ff$ is also locally free, of 
rank $rq^{\dim X}$. The sheaf $\F^{s*} \Ff$ is locally free of rank $r$,
and it is glued as a bundle using the cocycle obtained by raising the
coefficients of the transition matrices defining $\Ff$ to the $q$-th power.
If $\Ff$ is a line bundle, we infer from the above description of its 
pull-back that $\F^{s*} \Ff \isom \Ff^{\otimes q}$.

Let $\Gg$ be a locally free sheaf. Since the Frobenius is an
affine morphism, so that $\HH^i(X, \Ff) = \HH^i(X, \F_* \Ff)$, we immediately
deduce from the projection formula
$\F^s_* ( \Ff \otimes \F^{s*} \Gg ) \isom \F^s_* (\Ff) \otimes \Gg$
the following formulas concerning cohomology:
\begin{align}
\label{eq:proj1} 
	\HH^i(X, \Ff \otimes \F^{s*} \Gg) 
	  & \isom \HH^i(X, (\F^s_*\Ff) \otimes \Gg), \\
\label{eq:proj2} 
	\HH^i(X, \Ff (tq)) 
	  &\isom \HH^i(X, (\F^s_* \Ff)(t)), \\
\label{eq:proj3} 
	\HH^i(X, (\F^{s*} \Gg)(a+tq)) 
	  &\isom \HH^i(X, \F_*(\Oo(a)) \otimes \Gg(t)). 
\end{align}

\begin{remark} These isomorphisms are not $k$-linear, but
the dimensions over $k$ on both sides agree.
\end{remark}

\begin{definition}
	A coherent sheaf $\Ff$ on a projective variety $X$ with a very ample
	line bundle $\Ll$ is called \emph{arithmetically Cohen-Macaulay} 
	(\emph{ACM}) if
	\[ 
		\Ot \HH^i(X, \Ff \otimes \Ll^{\otimes t}) = 0 
	\quad \text{for} \quad 0 < i < \dim X . \]
\end{definition}

\noindent 
Formula (\ref{eq:proj2}) shows that the Frobenius push-forward of any coherent
ACM sheaf is ACM.

% -------------------------------------------------------------------------- %

\subsection{Quadrics}
\label{section:quadrics}

%Here we fix some notations to be used throughout the whole text.
Let $n$ be a positive integer. The \emph{smooth $n$-dimensional quadric} $Q_n$
(or simply $Q$) is the hypersurface in $\Pp^N$, $N=n+1$ defined by the 
equation $Q_n = 0$ where
\[ 
	Q_n = x_0^2 + x_1 x_2 + \ldots + x_n x_{n+1} 
\]
if $n$ is odd and
\[ 
	Q_n = x_0 x_1 + \ldots + x_n x_{n+1} 
\]
if $n$ is even. If $\chara\, k \neq 2$ then we can take a linear change of 
coordinates on $\Pp^N$ such that the quadric $Q_n$ is given by the simpler
equation $x_0^2 + \ldots + x_N^2 = 0$.

For completeness, let us also state here that by the adjunction formula $Q_n$ 
is a Fano variety with the canonical bundle $\omega_X = \Oo_Q(-n)$ and Hilbert
polynomial $q_t := \chi(\Oo_Q(t))$ equal to
\[ 
	q_t = \binom{N+t}{N} - \binom{N+t-2}{N}. 
\]

\begin{remark}
To simplify the calculations, we will assume that $n>2$. This is not a real 
restriction since $Q_1 \isom \Pp^1$ ($Q_1$ being the image of the Veronese 
embedding of $\Pp^1$ in $\Pp^2$) and $Q_2 \isom \Pp^1\times \Pp^1$ ($Q_2$ 
being the image of the Segre embedding of $\Pp^1\times\Pp^1$ in $\Pp^3$)
and everything we would want to say in these cases could be easily derived
from what has been said in the example in the Introduction.
\end{remark}

% -------------------------------------------------------------------------- %

\subsection{Spinor bundles}
\label{section:spinor}

Now we shall recall the basic facts about the so-called \emph{spinor bundles}
on smooth quadrics.
On $Q_n$, we have a single spinor bundle $\Sigma$ if $n$ is odd and two spinor
bundles $\Sigma_+$, $\Sigma_-$ (sometimes called half-spin) if $n$ is even. 
There are many equivalent ways of introducing them present in the literature.
We shall use the following:

{\bf Matrix factorizations.} A \emph{matrix factorization} of a
	  polynomial $f$ with $f(0, \ldots, 0) = 0$ is a pair $(\phi, \psi)$ of 
	  square matrices of the same size such that $\phi\cdot\psi = f\cdot id 
	  = \psi\cdot\phi$. It was first observed by Eisenbud in \cite{EMF} that
	  given an appropriate notion of a morphism, the matrix factorizations 
	  of $f$ form a category that is equivalent to the stable category of 
	  maximal Cohen-Macaulay modules over the local ring $\Oo_{k^n, 0}/(f)$ 
	  of the hypersurface defined by $f = 0$. The module corresponding to
	  $(\phi, \psi)$ is $\Coker\,\phi$ where $\phi$ is regarded as a map
	  $\Oo^m\to \Oo^m$, $m$ being the size of both matrices; it is an 
	  $\Oo/(f)$--module. 

	  Using this technique,	Eisenbud, Buchweitz and Herzog in \cite{EBH} 
	  then classified all indecomposable \emph{graded} maximal 
	  Cohen-Macaulay modules over $k[x_0, \ldots, x_N]/(Q_n)$. Their 
	  description remains valid over any field $k$. It turns out that 
	  apart from the free MCMs, there is (up to shift) only one 
	  indecomposable module $M$ if $n$ is odd and there are two of them, 
	  $M_+$ and $M_-$ if $n$ is even. The corresponding matrix 
	  factorizations can be defined	inductively as follows 
	  (see \cite{AL}, Section 2.2): 
	  \[ 
		\phi_{-1} = (x_{0}) = \psi_{-1}, 
		\quad \phi_0 = (x_0), 
		\quad \psi_0 = (x_1), 
	  \]
	  \[ 
		\phi_n = \left( \begin{array}{cc}
			\phi_{n-2}      & x_n \cdot id  \\
			x_{n+1}\cdot id & -\psi_{n-2}   \\
		\end{array} \right), 
		\quad
		\psi_n = \left( \begin{array}{cc}
			\psi_{n-2}      & x_n \cdot id  \\
			x_{n+1}\cdot id & -\phi_{n-2}   \\
		\end{array} \right).
	  \]

	  To define the spinor bundles using these matrix factorizations, we
	  consider $\phi_n$ and $\psi_n$ as maps between free sheaves on 
	  $\Pp^N$, i.e., $\phi_n, \psi_n: \Oo_{\Pp^N}(-2)^\sizephin \to 
	  \Oo_{\Pp^N}(-1)^\sizephin$. Then for odd $n$ we can define $\Sigma$ to be 
	  the cokernel of $\phi_n=\psi_n$, which is supported on $Q_n$.
	  For even $n$ we define $\Sigma_+$ to be the cokernel of $\phi_n$ and
	  $\Sigma_-$ to be the cokernel of $\psi_n$.
	  Additional references: \cite{Yoshino},  \cite{K} and \cite{A}.
	
As mentioned above, we have the following exact sequences of sheaves on 
$\Pp^N$:
\[
	0 \to \Oo_{\Pp^N}(-2)^\sizephin 
	  \xto{\phi_n=\psi_n} \Oo_{\Pp^N}(-1)^\sizephin \to i_* \Sigma \to 0 
\]
if $n$ is odd and
\[ 
	0 \to \Oo_{\Pp^N}(-2)^\sizephin 
	  \xto{\phi_n} \Oo_{\Pp^N}(-1)^\sizephin \to i_* \Sigma_+ \to 0, 
\]
\[ 
	0 \to \Oo_{\Pp^N}(-2)^\sizephin 
	  \xto{\psi_n} \Oo_{\Pp^N}(-1)^\sizephin \to i_* \Sigma_- \to 0 
\]
if $n$ is even. It follows that the spinor bundles are arithmetically 
Cohen-Macaulay. In fact, as implied by the Eisenbud-Buchweitz-Herzog theorem,
they provide a full description of ACM bundles on $Q_n$:
\begin{theorem*} Any coherent ACM sheaf $\Ff$ on a smooth quadric $Q_n$ is 
a direct sum of line bundles and twisted spinor bundles.
\end{theorem*}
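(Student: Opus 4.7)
The plan is to deduce this from the Eisenbud--Buchweitz--Herzog classification of graded maximal Cohen--Macaulay modules over the homogeneous coordinate ring $S = k[x_0,\ldots,x_N]/(Q_n)$, combined with the standard dictionary between ACM sheaves on $Q_n$ and graded MCM modules over $S$. The proof thus amounts to transporting $\Ff$ to a module, decomposing that module, and transporting the decomposition back to sheaves.

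First I would associate to $\Ff$ its module of twisted sections
\[
M := \Ot \HH^0(Q_n, \Ff(t)).
\]
The comparison between local cohomology at the irrelevant ideal $\mathfrak{m} = (x_0,\ldots,x_N)$ and sheaf cohomology on $Q_n = \mathrm{Proj}\, S$ shows that the ACM vanishing of $\HH^i(Q_n, \Ff(t))$ for $0 < i < n$ translates to $\HH^i_\mathfrak{m}(M) = 0$ for $i < n+1 = \dim S$. Hence $M$ is a finitely generated graded maximal Cohen--Macaulay $S$-module, and sheafification recovers $\tilde{M} \isom \Ff$. Next I would invoke Krull--Schmidt in the category of finitely generated graded $S$-modules: since $k$ is algebraically closed and $S$ is connected graded over $k$, every such module decomposes, uniquely up to permutation of summands and isomorphism, as a finite direct sum of indecomposables.

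Now the Eisenbud--Buchweitz--Herzog theorem \cite{EBH} identifies the indecomposable graded MCM $S$-modules: up to grading shift they are exactly the rank-one free module $S$ together with $\mathrm{Coker}\,\phi_n$ if $n$ is odd, or $\mathrm{Coker}\,\phi_n$ and $\mathrm{Coker}\,\psi_n$ if $n$ is even. Sheafifying the decomposition of $M$ and appealing to the defining exact sequences recalled in the previous subsection, each free summand $S(t)$ yields a copy of $\Oo_Q(t)$, and each matrix-factorization summand yields a twist of $\Sigma$ (respectively $\Sigma_+$ or $\Sigma_-$), producing the claimed decomposition of $\Ff$.

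The main obstacle is that the substance of the argument is packaged into three external inputs: the equivalence between ACM coherent sheaves on $Q_n$ and graded MCM modules over $S$ (requiring careful handling of the local cohomology translation and of the depth-versus-coherence behaviour of $\Gamma_*(\Ff)$), the graded Krull--Schmidt theorem (for which one must check that idempotents in the graded endomorphism ring split), and the nontrivial classification of \cite{EBH} itself. Granting these, the theorem is a purely formal assembly.
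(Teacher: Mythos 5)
Your proposal is correct and follows the same route as the paper, which simply states the result as an immediate consequence of the Eisenbud--Buchweitz--Herzog classification of graded MCM modules over $k[x_0,\ldots,x_N]/(Q_n)$, giving no further argument. You have merely spelled out the standard $\Gamma_*$/sheafification dictionary, the local-cohomology translation of the ACM condition, and graded Krull--Schmidt, all of which the paper leaves tacit; the only caveat worth noting is that the paper's definition of ACM sheaf does not by itself exclude, say, skyscraper summands, so one implicitly works with bundles (or at least pure sheaves of full dimension) to ensure $\Gamma_*(\Ff)$ is a finitely generated MCM module, but this imprecision is present in the paper as well.
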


In what follows, we shall use the bundle $\Ss$ defined by $\Ss=\Sigma$ for $n$ 
odd and $\Ss=\Sigma_+\oplus\Sigma_-$ for $n$ even. We thus have the exact 
sequence of sheaves on $\Pp^N$:
\begin{equation}\label{eq:exacts}
	0 \to \Oo_{\Pp^N}(-2)^\sizePhin 
	  \xto{\Phi} \Oo_{\Pp^N}(-1)^\sizePhin \to i_* \Ss \to 0, 
\end{equation}
where $(\Phi_n, \Psi_n)$ is the matrix factorization defined by 
$\Phi_n = \phi_n$, $\Psi_n = \psi_n$ if $n$ is odd and $\Phi_n = \phi_n 
\oplus \psi_n$, $\Psi_n = \psi_n \oplus \phi_n$ if $n$ is even. The exact 
sequence (\ref{eq:exacts}) allows us to compute the Hilbert polynomial 
$s_t := \chi(\Ss(t))$ of $\Ss$:
\[ 
	s_t = \MM \binom{n + t - 1}{n}. 
\]

\section{Some graded algebras and modules}
\label{section:algebras}

As we shall see in Section \ref{section:differentials}, the Euler sequence
allows us to translate dimensions of sheaf cohomology groups into dimensions
of gradings of certain 0-dimensional graded modules. In this section we
develop technical results which let us accomplish the tasks in Section 
\ref{section:fofs}.

% -------------------------------------------------------------------------- %

\subsection{Definitions}
\label{section:defs}

Let $Q$ be the equation of the $n$-dimensional quadric as in Section 
\ref{section:quadrics}. Recall that $q=p^s$ and $N=n+1$. We set
\begin{align*}
	S	&= k[x_0, \ldots, x_N], \\
	R	&= S/(Q), \\
	A^{(s)} &= R/(x_0^q + x_1^q, x_2^q, \ldots, x_N^q), \\
	B^{(s)} &= A/(x_0^q) = R/(x_0^q, x_1^q, \ldots, x_N^q), \\
	C^{(s)} &= A/(0:x_0^q),  \\
	D^{(s)} &= S/(x_0^q, \ldots, x_N^q), \\
	M^{(s)} &= (0:_{A^{(s)}} x_0^q)A^{(s)}/ (x_0^q)A^{(s)}. 
\end{align*}

\begin{remark}
 The strange generator $x_0^q + x_1^q$ in the 
definition of 
$A^{(s)}$ is used to make $A^{(s)}$ zero-dimensional (or to ensure that 
$(x_0^q+x_1^q, x_2^q, \ldots, x_N^q)$ is an R-regular sequence). It is easy to
check that the ring $S/(Q, x_1^q, \ldots, x_N^q)$ is one-dimensional when $n$ is even, 
i.e., $Q=x_0 x_1 + x_2 x_3 + \ldots$ and $p=2$. This is due to the fact that
$x_0^2$ does not appear in $Q$. In any other 
case, we can assume that $A=S/(Q, x_1^q, \ldots, x_N^q)$ as in \cite{AL}.
\end{remark}

%% -------------------------------------------------------------------------- %
%
%\subsection{More definitions}
%\label{section:moredefs}

By Section \ref{section:spinor}, we can write the module $\Gamma_*(\Ss)$ as the
cokernel of a map $\Phi: S[-2]^\MM\to S[-1]^\MM$ ($\Phi$ is an $\MM\times\MM$ 
matrix of linear forms). The following definitions pertain to spinor bundles 
and will be needed in Section \ref{section:fofs}: 
\begin{align*}
	Z              &= \Gamma_*(\Ss) = \Coker (\Phi), \\
	\tilde A^{(s)} &= Z/(x_0^q + x_1^q, x_2^q, \ldots, x_N^q)Z, \\
	\tilde B^{(s)}        &= Z/(x_0^q, x_1^q, \ldots, x_N^q)Z, \\
 	\tilde C^{(s)} &= \tilde A/(0:x_0^q),  \\
	\tilde M^{(s)} &= (0:_{\tilde A^{(s)}} x_0^q)\tilde A^{(s)}
	                  / (x_0^q)\tilde A^{(s)}. 
\end{align*}

\noindent
Recall that $Z$ is a maximal Cohen-Macaulay $R$-module and that 
$x_1, \ldots, x_N$ is a $Z$-regular sequence when $Z$ is considered as an 
$S$-module. Moreover, $\dim Z_d = \MM \binom{n + d - 1}{n} = s_d$. 

% -------------------------------------------------------------------------- %

\subsection{Dividing MCMs by $q$-th powers} 
\label{section:koszul}

Recall that in the example in the Introduction, $\dim D^{(s)}_d$ is the number of 
monomials in $x_0, \ldots, x_N$ of degree $d$ with all exponents $<q$, so by
the inclusion-exclusion principle we obtain the combinatorial formula (which we
already used there):
\begin{equation} \label{eq:dsd}
	\dim D^{(s)}_d = \sum_{i=0}^{N+1} (-1)^j \binom{N+1}{j}\binom{N+d-jq}{N}. 
\end{equation}
In our study of spinor bundles, we shall need a more general statement. The 
following lemma explains this combinatorial formula in more algebraic terms.

\begin{lemma} \label{lemma:koszul}
Let $M$ be a graded module over a graded algebra $R$ generated by $R_1$ over
a field $k=R_0$. Let $(x_1, \ldots, x_k) \in R_q$ be a regular sequence on $M$ 
and $I=(x_1,\ldots,x_k)$. Then 
\[
	\dim_k \left( M/IM \right)_d 
	  = \sum_{j=0}^{k} (-1)^j \binom{k}{j} \dim_k M_{d-jq}. 
\] 
\end{lemma}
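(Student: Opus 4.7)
The plan is to use the Koszul complex of the sequence $(x_1, \ldots, x_k)$ acting on $M$. Since we are told the sequence is $M$-regular, the Koszul complex provides a graded free resolution of $M/IM$, and the claimed identity is precisely the Euler characteristic of that resolution in internal degree $d$.

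More precisely, consider the Koszul complex $K_\bullet = K_\bullet(x_1, \ldots, x_k; R)$, whose $j$-th term is $\bigwedge^j R^k$. Because each $x_i$ lies in $R_q$, the standard internal grading on the Koszul complex gives
\[
    K_j \;=\; R(-jq)^{\binom{k}{j}}.
\]
Tensoring over $R$ with $M$ produces a complex $K_\bullet \otimes_R M$ whose $j$-th term is $M(-jq)^{\binom{k}{j}}$. The basic property of the Koszul complex (see e.g.\ Matsumura or Eisenbud) is that when $(x_1, \ldots, x_k)$ is a regular sequence on $M$, the homology $H_j(K_\bullet \otimes_R M)$ vanishes for $j > 0$, while $H_0 = M/IM$. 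So we get an exact sequence of graded modules
\[
    0 \to M(-kq)^{\binom{k}{k}} \to \cdots \to M(-jq)^{\binom{k}{j}} \to \cdots \to M(-q)^{k} \to M \to M/IM \to 0.
\]

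Now I would take the degree-$d$ piece, which is still an exact sequence of finite-dimensional $k$-vector spaces (finite-dimensionality follows from the fact that $R$ is generated by $R_1$ over $k$, so each graded component of $M$ is finite-dimensional provided $M$ is finitely generated; more generally, the alternating sum identity holds termwise whenever each $\dim M_{d-jq}$ is finite, which is the only case relevant here). The alternating sum of dimensions in an exact sequence vanishes, giving
\[
    \dim_k (M/IM)_d \;=\; \sum_{j=0}^{k} (-1)^j \binom{k}{j} \dim_k M_{d-jq},
\]
which is exactly the claim.

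The only substantive input is the standard fact that a regular sequence gives an exact Koszul complex; the rest is bookkeeping of the internal grading. So I do not expect a real obstacle here beyond making sure the degree shifts are recorded correctly (each $x_i$ contributes a shift by $q$, not by $1$, which is the only point where this differs from the familiar $\Pp^N$ computation recalled in the Introduction, and which specializes to (\ref{eq:dsd}) when $M = S$, $k = N+1$, and $x_i = x_{i-1}^q$).
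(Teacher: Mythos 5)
Your proof is correct and is essentially the same as the paper's: both construct the Koszul complex on $M$, cite the standard fact that a regular sequence makes it acyclic with $H_0 = M/IM$, and take the Euler characteristic in internal degree $d$, tracking the degree-$q$ shifts coming from $x_i \in R_q$.
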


\begin{proof}
We construct the Koszul complex $C_* = M\otimes {\sf K}(x_1, \ldots, x_k)$. By 
\cite{M}, Theorem 43 (or \cite{E}, Corollary 17.5) we have $H_i(C_*) = 0$ for 
$i>0$ and $H_0(C_*)=M/IM$. Hence 
\[
	\dim_k (M/IM)_d = \sum_{i\geq 0} (-1)^j \dim_k (C_j)_{d-jq}, 
\]
since the maps in the Koszul complex have degree $q$. But 
$
	C_j = \Lambda^{N+1-j} R^{N+1} \otimes M \isom M^{\binom{k}{j}}  
$,
which finishes the proof.
\end{proof}

Note also that by \cite{E}, Corollary 17.8, if $(x_0, \ldots, x_N)$ is an
$M$-regular sequence then so is $(x_0^q, \ldots, x_N^q)$. We deduce (\ref{eq:dsd})
once again, together with
%\begin{corollary*} 
 \begin{align}
  \label{eq:asd}     
	\dim A^{(s)}_d        &= \sum_{j=0}^N (-1)^j \binom{N}{j} q_{d-jq}, \\
  \label{eq:atsd}     
	\dim\tilde A^{(s)}_d &= \sum_{j=0}^N (-1)^j \binom{N}{j} s_{d-jq}.
 \end{align}
%\end{corollary*}

% -------------------------------------------------------------------------- %

\subsection{Dimensions of $B^{(s)}_d$ and $\tilde B^{(s)}_d$}
\label{section:dimensions}

We have the following two short exact sequences of graded modules:
\begin{align}
  \label{eq:exact1} 
    0 \to C^{(s)}[-q] \xto{x_0^q}  A^{(s)} \to B^{(s)} \to 0, \\
  \label{eq:exact2} 
    0\to \tilde C^{(s)}[-q] \xto{x_0^q} \tilde A^{(s)} \to \tilde B^{(s)} \to 0, 
\end{align}
Seeing that $\dim M^{(s)}_d = \dim B^{(s)}_d - \dim C^{(s)}_d$, we obtain 
$\dim B^{(s)}_{d} = \dim A^{(s)}_{d} + \dim M^{(s)}_{d-q} - \dim B^{(s)}_{d-q}$ 
(and the same with the tildes).
This gives the formulas
\begin{align}
  \label{eq:bsd}
     \dim B^{(s)}_{d} &= \sum_{j\geq 0} (-1)^j \dim A^{(s)}_{d-jq} 
                 + \sum_{j\geq 0} (-1)^j \dim M^{(s)}_{d-(j+1)q}\,, \\
  \label{eq:ksd}
     \dim \tilde B^{(s)}_{d} &= \sum_{j\geq 0} (-1)^j \dim \tilde A^{(s)}_{d-jq} 
                 + \sum_{j\geq 0} (-1)^j \dim \tilde M^{(s)}_{d-(j+1)q}\,. 
\end{align}

\section[The Frobenius morphism and the sheaf of differentials]
        {The Frobenius morphism\\ and the sheaf of differentials}
\label{section:differentials}

Now let us relate the commutative algebra from Section \ref{section:algebras} 
to cohomology groups to be used in Section \ref{section:fofs}.
The following standard result can be found e.g. in \cite{BM}.

\begin{lemma}\label{lemma:bm}
	Let $H\subseteq \Pp^N$ ($N>2$) be the hypersurface given by $f=0$. 
	Then there is an isomorphism
	of graded $S/(f)$-modules:
	\[ 
		\Ot \HH^1(H, (\F^{s*}(\Omega^1_{\Pp^N}|_H))(t)) \isom D^{(s)}/(f)
	\]
\end{lemma}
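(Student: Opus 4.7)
The plan is to derive the formula from the Euler sequence on $\Pp^N$, pulled back by the $s$-th Frobenius. Recall the Euler sequence
\[
	0 \to \Omega^1_{\Pp^N} \to \Oo_{\Pp^N}(-1)^{N+1} \to \Oo_{\Pp^N} \to 0,
\]
in which the right-hand map is given by $(x_0,\ldots,x_N)$. Since $\Pp^N$ is smooth, $\F^s$ is flat, so applying $\F^{s*}$ preserves exactness. The key observation is that pulling the surjection back by $\F^s$ raises each coordinate $x_i$ to its $q$-th power, because $\F^{s*}$ acts on global sections of line bundles via the $q$-th power map; one similarly identifies $\F^{s*}\Oo_{\Pp^N}(-1) \isom \Oo_{\Pp^N}(-q)$. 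Restricting to $H$ (and using compatibility $\F^{s*}_H \circ i^* = i^* \circ \F^{s*}_{\Pp^N}$) yields the exact sequence
\[
	0 \to \F^{s*}(\Omg|_H) \to \Oo_H(-q)^{N+1} \xto{(x_0^q,\ldots,x_N^q)} \Oo_H \to 0.
\]

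Next I would twist by $\Oo_H(t)$ and pass to the long exact sequence of cohomology on $H$. The hypothesis $N>2$ (so $\dim H \geq 2$) is used here: from the ideal sequence $0 \to \Oo_{\Pp^N}(t-\deg f) \to \Oo_{\Pp^N}(t) \to \Oo_H(t) \to 0$ and the vanishing of $\HH^i(\Pp^N,\Oo(m))$ for $0<i<N$ one obtains $\HH^1(H,\Oo_H(m))=0$ for every $m$, and the surjectivity $\HH^0(\Pp^N,\Oo(m)) \twoheadrightarrow \HH^0(H,\Oo_H(m))$, so that $\bigoplus_t \HH^0(H,\Oo_H(t)) \isom S/(f)$ as graded $S$-modules. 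The long exact sequence then collapses to
\[
	0 \to \HH^0(\F^{s*}(\Omg|_H)(t)) \to \HH^0(\Oo_H(t-q))^{N+1} \xto{(x_0^q,\ldots,x_N^q)} \HH^0(\Oo_H(t)) \to \HH^1(\F^{s*}(\Omg|_H)(t)) \to 0.
\]

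Finally, summing over $t\in\Zz$, the middle map becomes the $S/(f)$-linear map $(S/(f))[-q]^{N+1} \to S/(f)$ given by multiplication by $(x_0^q,\ldots,x_N^q)$, whose cokernel is by definition $S/(f,x_0^q,\ldots,x_N^q) = D^{(s)}/(f)$. This yields the claimed graded $S/(f)$-module isomorphism, the module structure being inherited from the natural multiplication action on the cohomology groups (compatible with the long exact sequence). The main technical points to check carefully are the identification of the pulled-back Euler map with $(x_0^q,\ldots,x_N^q)$ and the vanishing of $\HH^1(H,\Oo_H(m))$ that allows the long exact sequence to split into the four-term form above; neither presents a real obstacle given $N>2$.
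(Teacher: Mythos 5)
Your proof is correct. The paper does not give a proof of Lemma~\ref{lemma:bm} itself, citing \cite{BM} instead, and your derivation from the Frobenius pullback of the Euler sequence restricted to $H$ is exactly the expected argument: the pullback of the Euler surjection is $(x_0^q,\ldots,x_N^q)\colon \Oo_H(-q)^{N+1}\to\Oo_H$ (each linear form is raised to the $q$-th power under $\F^{s*}$ on line bundles), the hypothesis $N>2$ kills $\HH^1(H,\Oo_H(m))$ and makes $\HH^0(\Pp^N,\Oo(m))\to\HH^0(H,\Oo_H(m))$ surjective via the ideal-sheaf sequence and the vanishing of intermediate cohomology of line bundles on $\Pp^N$, and summing the resulting four-term exact sequences over $t$ identifies $\Ot\HH^1(H,(\F^{s*}(\Omega^1_{\Pp^N}|_H))(t))$ with the cokernel $S/(f,x_0^q,\ldots,x_N^q)=D^{(s)}/(f)$ as graded $S/(f)$-modules.
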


\noindent For $s=0$ we obtain
\begin{equation} \label{eq:bm2}
	h^1(\Omega^1_{\Pp^N}|_H (t)) = \delta_{t, 0} 
\end{equation}

When $Q$ is our quadric and $\Ss$ the spinor bundle defined in Section 
\ref{section:spinor}, we have

\begin{lemma} \label{lemma:bm2}
We have the following isomorphism of $R=S/(Q)$-modules:
	\[
	  \Ot \HH^1(\Ss\otimes \F^{s*}\Omega^1_{\Pp^N}|_Q(t)) \isom \tilde B^{(s)}. 
	\]
\end{lemma}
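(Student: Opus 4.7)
The plan is to exploit the Euler sequence on $\Pp^N$ in the same spirit as Lemma \ref{lemma:bm}, but now twisted by $\Ss$ on $Q$. Pulling back the Euler sequence by $\F^s$ (which is flat, and the sequence is locally split) and using $\F^{s*}\Oo_{\Pp^N}(-1)\isom\Oo_{\Pp^N}(-q)$, we obtain the short exact sequence
\[
  0\to \F^{s*}\Omg \to \Oo_{\Pp^N}(-q)^{N+1} \to \Oo_{\Pp^N} \to 0
\]
on $\Pp^N$, where the right-hand map is now $(f_0,\ldots,f_N)\mapsto \sum x_i^q f_i$. Restricting to $Q$ (the sequence is locally split, so this is exact) and tensoring with the locally free sheaf $\Ss$ yields the exact sequence
\[
  0\to \Ss\otimes \F^{s*}\Omg|_Q \to \Ss(-q)^{N+1} \to \Ss \to 0
\]
on $Q$.

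Next I would take the associated long exact sequence in cohomology, twisting everywhere by $\Oo_Q(t)$ and then summing over $t\in\Zz$. The key simplification comes from the fact (recalled in Section \ref{section:spinor}) that $\Ss$ is ACM: since $n>2$, we have $\HH^1(\Ss(t-q))=0$ for every $t$. Therefore the long exact sequence collapses at the relevant spot into
\[
  \bigoplus_t \HH^0(\Ss(t-q))^{N+1} \xto{\;(x_0^q,\ldots,x_N^q)\;} \bigoplus_t \HH^0(\Ss(t)) \to \bigoplus_t \HH^1(\Ss\otimes\F^{s*}\Omg|_Q(t)) \to 0.
\]
Identifying $\bigoplus_t \HH^0(\Ss(t))$ with $Z=\Gamma_*(\Ss)$ (again using that $\Ss$ is ACM, so this is really $\Gamma_*$ of a twist), the cokernel is visibly $Z/(x_0^q,\ldots,x_N^q)Z = \tilde B^{(s)}$, which is exactly what we want.

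The step requiring the most care is verifying that the map induced on global sections is precisely multiplication by the regular sequence $(x_0^q,\ldots,x_N^q)$ on the graded module $Z$; this amounts to chasing the Frobenius pull-back of the Euler map and noting that, on each local chart, it is still just contraction against $(x_0^q,\ldots,x_N^q)$. Everything else is formal: exactness of restriction and tensoring rely on local splitting and local freeness, while the ACM vanishing was built into the definition of $\Ss$. The graded $R$-module structure on both sides is compatible by naturality of the construction, so the isomorphism is one of $R$-modules as claimed.
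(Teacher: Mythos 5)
Your proof is correct, but it takes a genuinely different route from the paper's. The paper resolves $\Ss$ on $\Pp^N$ by the matrix-factorization presentation $0 \to \Oo_{\Pp^N}(-2)^\MM \xto{\Phi} \Oo_{\Pp^N}(-1)^\MM \to i_*\Ss \to 0$, tensors with $\F^{s*}\Omg(t)$, and identifies $\HH^1(Q, \Ss\otimes\F^{s*}\Omg|_Q(t))$ as the cokernel of $\Phi$ acting on the already-known groups $\HH^1(\Pp^N,\F^{s*}\Omg(\cdot)) \isom D^{(s)}_\cdot$; this cokernel is $(Z\otimes_S D^{(s)})_t = \tilde B^{(s)}_t$ by right-exactness of tensor. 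You instead resolve the \emph{other} factor, $\F^{s*}\Omg$, via the Frobenius pull-back of the Euler sequence, restrict to $Q$, tensor with $\Ss$, and use that $\HH^1(\Ss(t-q))=0$ (ACM-ness, $n>2$) to read off the cokernel of $(x_0^q,\ldots,x_N^q)$ on $Z=\Gamma_*(\Ss)$ directly. Both are valid and essentially dual: the paper's approach is tailored to the ACM-via-matrix-factorization setup and generalizes immediately to any ACM sheaf on a hypersurface (as the paper remarks after the proof), while yours is more elementary, avoids the $\Pp^N$ computation and the cokernel-of-$\Phi$ bookkeeping, and would apply verbatim to any ACM bundle on any projective variety $X\subseteq\Pp^N$ of dimension $>2$. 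Do note one small thing you gloss over: the identification $\Gamma_*(\Ss)=Z=\Coker(\Phi\colon S[-2]^\MM\to S[-1]^\MM)$ (so that the image of your map is literally $(x_0^q,\ldots,x_N^q)Z$ inside the module $Z$ of Section \ref{section:defs}) is not a consequence of ACM-ness; it follows from applying $\Gamma_*$ to the matrix-factorization sequence and using $\HH^1(\Pp^N,\Oo(t))=0$, and is implicitly built into the paper's definition of $Z$.
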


\begin{proof}
We write the exact sequence 
\[ 0 \to \Oo_{\Pp^N}(-2)^\MM \xto{\Phi} \Oo_{\Pp^N}(-1)^\MM \to \Ss \to 0, \]
tensor it by $\F^{s*}\Omega^1_{\Pp^N}(t)$ and look once again at the long
cohomology exact sequence
\begin{align*}
 \ldots\to &   \HH^1(\Pp^N, \F^{s*}\Omega^1_{\Pp^N}(t-2))^\MM \xto{\Phi} 
               \HH^1(\Pp^N, \F^{s*}\Omega^1_{\Pp^N}(t-1))^\MM \\
 \to       & \HH^1(Q, \Ss\otimes \F^{s*}\Omega^1_{\Pp^N}|_Q(t)) \to 
             \HH^2(\Pp^N, \F^{s*}\Omega^1_{\Pp^N}(t-2))^\MM. 
\end{align*}
The last group vanishes as $N>2$ (apply $\F^{s*}(-)\otimes\Oo_{\Pp^N}(d)$
to the Euler sequence and look at the cohomology exact sequence), so 
$\HH^1(Q, \Ss\otimes \F^{s*}\Omega^1_{\Pp^N}(t))$ is the cokernel of the map
\[
	\HH^1(\Pp^N, \F^{s*}\Omega^1_{\Pp^N}(t-2))^\MM \xto{\Phi} 
	  \HH^1(\Pp^N, \F^{s*}\Omega^1_{\Pp^N}(t-1))^\MM .
\]
Using our description of these groups from the previous lemma
we see that it is just the $t$-th grading of the graded module $\tilde B^{(s)}$.
\end{proof}

Clearly this lemma works (with the definitions slightly adjusted)
for arbitrary ACM sheaves over hypersurfaces (since
ACM sheaves are given by matrix factorizations).

As a corollary, for $s=0$ we have the following formula (see 
\cite{AL}, Proposition 4.1):
\begin{equation} \label{eq:bm3}
	h^1(\Ss\otimes \Omega^1_{\Pp^N}|_Q (t)) = \MM\cdot \delta_{t, 1} .
\end{equation}

\section{Decompositions of $\F^s_* (\Oo(a))$ and $\F^s_* (\Ss(a))$}
\label{section:fofs}

Let $\beta^s(t, a)$, $\gamma^s(t, a)$, $\delta^s(t, a)$ and $\varepsilon^s(t, a)$ 
be defined by the decompositions
\[ 
	\F^s_* (\Oo(a)) = \Ot\Oo(t)^{\beta^s(t,a)} 
	                  \oplus \Ot \Ss(t)^{\gamma^s(t,a)}, 
\]
\[ 
	\F^s_* (\Ss(a)) = \Ot\Oo(t)^{\delta^s(t,a)} 
		\oplus \Ot \Ss(t)^{\varepsilon^s(t,a)},
\]
where $\Ss$ is the spinor bundle or the sum of the two half-spin bundles as
defined in \ref{section:spinor}. 

% -------------------------------------------------------------------------- %

%\subsection*{Step 1}

\noindent \pscirclebox{1} 
By the projection formula ((\ref{eq:proj2}) for $\Ff = \Oo(a)$ or $\Ss(a)$
and $t=b$) we obtain
\begin{align} \label{eq:qabp}
    q_{a+bq} &= \St \beta^s(t,a)\cdot q_{t+b} 
             + \St \gamma^s(t,a)\cdot s_{t+b} ,\\
\label{eq:sabp}
    s_{a+bq} &= \St \delta^s(t,a)\cdot q_{t+b} 
             + \St \varepsilon^s(t,a)\cdot s_{t+b} .
\end{align} 
(the formulas hold for $b$ large enough, and hence for all $b$ since they are
equalities of polynomials in $b$).

% -------------------------------------------------------------------------- %

%\subsection*{Step 2}

\noindent \pscirclebox{2} 
Let $\psi = \Omega^1_{\Pp^N}|_{Q}$. By the projection formula ((\ref{eq:proj3})
for $\Gg = \psi$, $i=1$ and $t=-b$): 
\[
	\HH^1(Q_n, (\F^{s*}\psi)(a-bq)) 
	  = \HH^1(Q_n, \F^s_* (\Oo(a)) \otimes \psi(-b)). 
\]
By Lemma \ref{lemma:bm} we then have
\[
	\dim B^{(s)}_{a-bq} 
	= \dim\left(k[x_0, \ldots, x_N]/(Q, x_0^q, \ldots, x_N^q)\right)_{\! a-bq}
	= h^1(\F^s_*(\Oo(a)) \otimes \psi(-b)) 
\]
which can be rewritten as
\[ 
	\dim B^{(s)}_{a-bq} = \St \beta^s(t,a)\cdot h^1(\psi(t-b)) 
	                    + \St \gamma^s(t,a)\cdot h^1(\psi\otimes \Ss(t-b)). 
\]
But $h^1(\psi(t-b)) = \delta_{t,b}$ by (\ref{eq:bm2}) and 
$h^1(\psi\otimes \Ss(t-b)) = \MM\cdot \delta_{t, b+1}$ by (\ref{eq:bm3}), 
so this reduces to
$
	\dim B^{(s)}_{a-bq} = \beta^s(b, a) + \MM \cdot\gamma^s(b+1, a)
$
or 
\begin{align} \label{eq:bta}
	\beta^s(t, a) &= \dim B^{(s)}_{a-tq} - \MM\cdot \gamma^s(t+1, a).\\
\shortintertext{
Similarly, using Lemma \ref{lemma:bm2} and (\ref{eq:bm2}) one obtains
}
 \label{eq:dta}
	\delta^s(t, a) &= \dim \tilde B^{(s)}_{a-tq} - \MM\cdot \varepsilon^s(t+1, a). 
\end{align}

% -------------------------------------------------------------------------- %

%\subsection*{Step 3}

\noindent \pscirclebox{3} 
We put (\ref{eq:bta}) into (\ref{eq:qabp}), thus obtaining
\begin{align*}
  q_{a+bq} &= \St (\dim B^{(s)}_{a-tq} - \MM\cdot \gamma^s(t+1, a))q_{b+t} 
              + \St \gamma^s(t, a)\cdot s_{b+t}\\
           &= \St \dim (B^{(s)}_{a-tq})q_{t+b} 
              + \St \gamma^s(t, a)(s_{t+b} - \MM\cdot q_{t+b-1}) \\
           &= \St \dim (B^{(s)}_{a-tq})q_{t+b} 
              - \MM \St \gamma^s(t, a) \binom{n+t-2+b}{n}. 
\end{align*}
We rewrite this as
\begin{align} \label{eq:main}
	\St \dim(B^{(s)}_{a-tq})q_{b+t} - q_{a+bq}
	&= \MM \St \gamma^s(t+2, a)\binom{n+t+b}{n}. \\
\shortintertext{
Analogously, we get
}
\label{eq:mainsp}
	\St \dim (\tilde B^{(s)}_{a-tq})q_{b+t} - s_{a+bq}
	&= \MM \St \varepsilon^s(t+2, a)\binom{n+t+b}{n}. 
\end{align}
We treat both sides as polynomials in $b$. Our goal is to rewrite the left hand
side as a combination of $\binom{n+t_i+b}{n}$ for some $t_i$ ($i=0,\ldots, n$)
and conclude that this determines the numbers $\gamma^s(t+2, a)$ since for any 
pairwise distinct numbers $t_0, \ldots, t_n$ the polynomials $\binom{t_i+x}{n}$
are linearly independent, and $\gamma^s(t, a)$, $\varepsilon^s(t, a)$ do not 
vanish only when $t=t_i$ for some $i \in \{0, \ldots, n\}$.

% -------------------------------------------------------------------------- %

%\subsection*{Step 4}

\noindent \pscirclebox{4} 
Now we use the formulas (\ref{eq:bsd}) and (\ref{eq:ksd}) 
for $\dim B^{(s)}_d$ and $\dim \tilde B^{(s)}_d$ to expand the left hand 
sides of (\ref{eq:main}) and (\ref{eq:mainsp}), first calculating the sums
\begin{align*}
   \St \dim (B^{(s)}_{a-tq}) q_{b+t}  
     &= \underbrace{\St \sum_{j\geq 0} (-1)^j \dim(A^{(s)}_{a-tq-jq}) q_{b+t}}_{S_1} 
     + \underbrace{\St \sum_{j\geq 0} (-1)^j \dim(M^{(s)}_{a-q-tq-jq}) q_{b+t}}_{S_2}, \\
   \St \dim(\tilde B^{(s)}_{a-tq})q_{b+t}  
     &= \underbrace{
		\St \sum_{j\geq 0} (-1)^j \dim (\tilde A^{(s)}_{a-tq-jq}) q_{b+t}
	}_{S'_1} + \underbrace{
		\St \sum_{j\geq 0} (-1)^j \dim (\tilde M^{(s)}_{a-q-tq-jq}) q_{b+t}
	}_{S'_2}. 
\end{align*}

\begin{lemma} \label{lemma:sums}
	Let $\alpha(t) = \sum_{j\geq 0} (-1)^j \binom{n+1}{j} f(t-jq)$. Then
	\[ 
		f(a+bq) = \sum_{t\in\Zz} \alpha(a - tq)\binom{n+t+b}{n}.
	\]
\end{lemma}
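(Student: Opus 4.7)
The plan is to substitute the definition of $\alpha$ into the right-hand side, swap the order of summation after a change of index, and recognize the resulting inner sum as a Kronecker delta coming from an elementary Cauchy product identity.

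Concretely, I would begin with
\[
\sum_{t \in \Zz} \alpha(a-tq)\binom{n+t+b}{n}
  = \sum_{t} \sum_{j=0}^{n+1} (-1)^j \binom{n+1}{j} f\bigl(a-(t+j)q\bigr) \binom{n+t+b}{n}.
\]
Since the sum over $j$ is finite, I would swap sums and reindex by $s = t+j$, which separates $f$ from the binomials and yields
\[
\sum_{s \in \Zz} f(a-sq)\,T(s+b), \qquad
T(m) := \sum_{j=0}^{n+1} (-1)^j \binom{n+1}{j} \binom{n+m-j}{n}.
\]
It then suffices to show $T(m) = \delta_{m,0}$: this collapses the outer sum to the single term $s = -b$, yielding exactly $f(a+bq)$.

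The identity $T(m) = \delta_{m,0}$ I would establish by recognizing $T(m)$ as the $z^m$-coefficient of the Cauchy product
\[
(1-z)^{n+1} \cdot \frac{1}{(1-z)^{n+1}} = 1,
\]
using the standard expansions $(1-z)^{n+1} = \sum_{j} (-1)^j \binom{n+1}{j} z^j$ and $(1-z)^{-(n+1)} = \sum_{k \geq 0} \binom{n+k}{n} z^k$. The only point needing care is the convention on binomial coefficients: both for $(1-z)^{-(n+1)}$ to expand as above and for the $\sum_{t\in\Zz}$ on the right-hand side to be effectively a finite sum, one must take $\binom{n+m-j}{n}=0$ whenever $n+m-j<n$ (the combinatorial rather than the polynomial convention). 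This is consistent with the applications in Section \ref{section:fofs}, where $f$ is the Hilbert function of a non-negatively graded module and so vanishes for sufficiently negative arguments. I do not anticipate any substantive obstacle; the argument is a straightforward generating-function manipulation, parallel in spirit to Lemma \ref{lemma:koszul}.
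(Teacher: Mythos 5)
Your proof is correct and takes essentially the same approach as the paper's: substitute the definition of $\alpha$, reindex, and identify the inner sum as a Kronecker delta via the coefficient of $z^m$ in $(1-z)^{n+1}\cdot(1-z)^{-(n+1)}=1$. Your explicit remark about the combinatorial convention $\binom{m}{n}=0$ for $m<n$ is a helpful clarification of a point the paper leaves implicit.
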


\begin{proof}
Expanding the right hand side gives 
$\sum_{u\in\Zz} f(a+uq) \left(
	     \sum_{i+j=b-u} (-1)^j \binom{n+1}{j} \binom{n+i}{n}
	  \right)$ 
and the nested sum is equal to the coefficient of $z^{b-u}$ in $(1-z)^{n+1} 
\cdot (1-z)^{-n-1} = 1$. So it is just $\delta_{b, u}$.
\end{proof}

\begin{lemma} \label{lemma:identities}
	The following identities hold
	\[ 
		q_{a+bq} = \sum_{t\in\Zz} \dim A^{(s)}_{a-tq} \binom{n+b+t}{n},
		\quad
	   s_{a+bq} = \sum_{t\in\Zz} \dim \tilde A^{(s)}_{a-tq} \binom{n+b+t}{n}. 
	\]
\end{lemma}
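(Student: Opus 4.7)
The plan is to obtain both identities as immediate corollaries of Lemma \ref{lemma:sums} combined with the explicit Koszul-type formulas (\ref{eq:asd}) and (\ref{eq:atsd}) already established in Section \ref{section:koszul}. Both sides of each identity are polynomials in $b$ (for large enough $b$ the Hilbert polynomial values coincide with dimensions, and the right-hand sides are finite sums of binomial polynomials in $b$), so it suffices to verify the identity formally.

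First I would apply Lemma \ref{lemma:sums} to the function $f(d) = q_d$, the Hilbert polynomial $\chi(\Oo_Q(d))$. The lemma produces the identity
\[
  q_{a+bq} = \sum_{t\in\Zz} \alpha(a-tq)\binom{n+t+b}{n},
  \qquad
  \alpha(t) = \sum_{j\geq 0}(-1)^j \binom{n+1}{j} q_{t-jq}.
\]
Since $N=n+1$, formula (\ref{eq:asd}) rewrites precisely as $\dim A^{(s)}_t = \alpha(t)$, yielding the first claimed identity. For the second, I would repeat the argument with $f(d) = s_d = \chi(\Ss(d))$; Lemma \ref{lemma:sums} gives the analogous expansion, and (\ref{eq:atsd}) identifies the coefficient $\sum_j (-1)^j \binom{n+1}{j} s_{t-jq}$ with $\dim \tilde A^{(s)}_t$.

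There is essentially no obstacle here beyond matching indices: the two identities are nothing but a repackaging of the Koszul-resolution dimension formulas via the combinatorial inversion supplied by Lemma \ref{lemma:sums}. The only thing one might want to double-check is that $A^{(s)}$ and $\tilde A^{(s)}$ are indeed quotients by regular sequences of length $N = n+1$ (so that the alternating sum in (\ref{eq:asd}), (\ref{eq:atsd}) really has $n+1$ terms matching the $\binom{n+1}{j}$ in the definition of $\alpha$), which was arranged exactly by the choice of the generator $x_0^q + x_1^q$ in the definition of $A^{(s)}$, as noted in the Remark of Section \ref{section:defs}.
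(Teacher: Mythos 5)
Your proof is correct and is precisely the paper's own argument: apply Lemma \ref{lemma:sums} with $f=q_t$ and $f=s_t$, then identify the resulting $\alpha$ with $\dim A^{(s)}$ and $\dim\tilde A^{(s)}$ via formulas (\ref{eq:asd}) and (\ref{eq:atsd}). Your closing remark about the regular-sequence length $N=n+1$ matching the binomial $\binom{n+1}{j}$ is a sensible sanity check and consistent with the Remark in Section \ref{section:defs}.
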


\begin{proof}
This follows immediately from Lemma \ref{lemma:sums} for $f(t) = q_t$ 
and $f(t) = s_t$ and from 
the formulas (\ref{eq:asd}), (\ref{eq:atsd}) for the dimensions of 
$A_d$ and $\tilde A_d$.
\end{proof}

\begin{lemma} \label{lemma:sums2}
	Let $\alpha(t) = \sum_{j\geq 0} (-1)^j f(t-jq)$. Then
	\[
		\sum_{t\in\Zz} \alpha(a - tq) q_{b+t} 
		= \sum_{t\in \Zz} f(a - tq) \binom{n+t+b}{n}. 
	\]
\end{lemma}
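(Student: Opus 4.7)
The plan is to reduce the lemma to two elementary cancellation identities. First, directly from the definition of $\alpha$, an index shift in the second term of
\[
\alpha(t) + \alpha(t-q) \;=\; \sum_{j\geq 0}(-1)^j f(t-jq) \;+\; \sum_{j\geq 0}(-1)^j f(t-(j+1)q)
\]
collapses the right-hand side to $f(t)$; in operator language, $\alpha = (1 + S^q)^{-1} f$, where $S^q$ denotes the shift by $q$. Second, from the defining formula $q_m = \binom{N+m}{N} - \binom{N+m-2}{N}$ with $N = n+1$, a single application of Pascal's rule gives the factorization
\[
q_m \;=\; \binom{n+m}{n} + \binom{n+m-1}{n}.
\]

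Given these, I would start from the right-hand side of the lemma and substitute $f(a-tq) = \alpha(a-tq) + \alpha(a-(t+1)q)$, then split the result into two sums over $t$. A reindexing $t \mapsto t-1$ in the second sum converts $\binom{n+t+b}{n}$ into $\binom{n+t+b-1}{n}$, so the two pieces combine under a common factor of $\alpha(a-tq)$:
\[
\sum_{t\in\Zz} \alpha(a-tq)\left[\binom{n+t+b}{n} + \binom{n+t+b-1}{n}\right] \;=\; \sum_{t\in\Zz} \alpha(a-tq)\, q_{b+t},
\]
which is exactly the left-hand side. The matching is automatic because the two cancellation identities above are dual: $\alpha$ is an "antiderivative" of $f$ under $1+S^q$, and $q_m$ is the corresponding sum of binomials under $1+S$.

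The only delicate point is convergence, since the sum defining $\alpha$ is formally infinite. However, in every application in the paper the function $f$ is the Hilbert function of an Artinian graded module ($A^{(s)}$, $M^{(s)}$, $\tilde A^{(s)}$, or $\tilde M^{(s)}$), hence has finite support; the sum defining $\alpha$ then terminates, and the reindexing above becomes a finite, term-by-term manipulation. I do not foresee any substantive obstacle beyond recording this hypothesis; the proof is essentially two lines once the telescoping of $\alpha$ and the Pascal decomposition of $q_m$ are in hand.
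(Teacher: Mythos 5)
Your argument is correct and is a genuinely different (dual) route from the paper's. The paper expands the \emph{left}-hand side, interchanges the order of the double sum (setting $u=t+j$), and then evaluates the inner alternating partial sum via the identity $\sum_{i\leq x}(-1)^{x-i}q_i=\binom{n+x}{n}$. You instead work from the \emph{right}-hand side, using the pair of elementary identities $\alpha(t)+\alpha(t-q)=f(t)$ and $q_m=\binom{n+m}{n}+\binom{n+m-1}{n}$, the second of which you verify correctly from $q_m=\binom{N+m}{N}-\binom{N+m-2}{N}$, $N=n+1$, via Pascal's rule. The two intermediate identities are related (the paper's alternating partial sum identity follows by telescoping yours), but your organization is cleaner: it makes visible that the lemma is exactly an Abel-summation trade of the operator $(1+S^q)^{-1}$ on $f$ for the operator $(1+S)$ on $\binom{n+\cdot+b}{n}$, and that these are transposes of one another. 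The paper's version is marginally more direct but hides that structure inside a reindexing.

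One small caveat on your convergence paragraph: the delicate point is not the sum over $j$ defining $\alpha(t)$ — that is finite for each fixed $t$ whenever $f$ has finite support — but the outer sum over $t$. Even with $f$ Artinian, $\alpha$ is only eventually anti-periodic of period $q$ (not eventually zero), so after you split $\sum_t[\alpha(a-tq)+\alpha(a-(t+1)q)]\binom{n+t+b}{n}$ into two separate sums, each piece ranges over infinitely many $t$ with nonzero terms. The manipulation is still valid as an identity of formal sums (and the paper's own interchange of summation has exactly the same feature), so this is a shared, harmless looseness rather than a gap in your argument — but the place where the finite-support hypothesis needs to be invoked is different from where you point to it.
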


\begin{proof}
We expand the left hand side
\[
  LHS = \sum_{t\in\Zz} \sum_{j\geq 0} (-1)^j f(a - tq - jq) q_{b+t} 
      = \sum_{u\in\Zz} f(a+qu) \left(\sum_{i\leq b+u} (-1)^{b+u-i}q_i \right).
\]
and observe that $\sum_{i\leq x} (-1)^{x-i} q_i = \binom{n+x}{n}$, which yields
the result.
\end{proof}

Now by Lemma \ref{lemma:identities}, $S_1$ and $S'_1$ cancel out with 
$q_{a+bq}$ and $s_{a+bq}$ on the left hand sides of (\ref{eq:main}) and 
(\ref{eq:mainsp}) respectively, and Lemma \ref{lemma:sums2} shows that 
\[
	S_2 = \sum_{t\in \Zz} \dim M^{(s)}_{a-(t+1)q} \binom{n+t+b}{n}, 
	\quad
	S'_2 = 	\sum_{t\in \Zz} \dim \tilde M^{(s)}_{a-(t+1)q} \binom{n+t+b}{n}.
\]
Putting these into (\ref{eq:main}) and (\ref{eq:mainsp}) 
(and replacing $t$ by $t-2$) yields 
\begin{align}\label{eq:main2}
 \sum_{t\in\Zz} \left( 
	\frac{1}{\MM}\dim M^{(s)}_{a-(t-1)q} - \gamma^s(t, a) 
 \right) \binom{n+t-2+b}{n} &= 0, \\
\label{eq:mainsp2}
	\St \left(
		\frac{1}{\MM}\dim \tilde M^{(s)}_{a-(t-1)q} - \varepsilon^s(t, a) 
	\right) \binom{n+t-2+b}{n} &= 0. 
\end{align}

% -------------------------------------------------------------------------- %

%\subsection*{Step 5}
\noindent \pscirclebox{5} 
We want to conclude from (\ref{eq:main2}) and (\ref{eq:mainsp2}) that 
\[
	\gamma^s(t, a) = \frac{1}{\MM} \dim M^{(s)}_{a-(t-1)q}
	\quad \text{and} \quad
	\varepsilon^s(t, a) =  \frac{1}{\MM} \dim \tilde M^{(s)}_{a-(t-1)q},
\]
which with the formulas (\ref{eq:bta}) and (\ref{eq:dta}) immediately gives
\[
	\beta^s(t, a) = \dim C^{(s)}_{a-tq}
	\quad \text{and} \quad
	\delta^s(t, a) = \dim \tilde C^{(s)}_{a-tq}.	
\]

Observe that by the formula (\ref{eq:bta}), $\gamma^s(t+1, a)\neq 0$ implies 
$B^{(s)}_{a-tq} \neq 0$. Note that $B^{(s)}_d \neq 0$ only 
for $0\leq d\leq (q-1)(n+1)$ and $K^{(s)}_d\neq 0$ only for $1\leq d\leq (q-1)(n+1)+1$
(since $D^{(s)}_d\neq 0$ if and only if $0\leq d\leq (q-1)(n+1)$). 
Therefore if
$\frac{1}{\MM} \dim M^{(s)}_{a-(t-1)q} - \gamma^s(t, a)$ is non-zero, then $0 \leq 
a - (t-1)q \leq (n+1)(q-1)$. This can happen for at most $n+1$ values of $t$,
so (\ref{eq:main2}) is an equation of linear dependence of the polynomials 
$\binom{t_i+x}{n}$ for $n+1$ distinct values $t_i$ (similarly with 
(\ref{eq:mainsp2})). As they are clearly linearly
independent (by the Vandermonde determinant), we conclude that all coefficients
are zero. This yields

\begin{theorem} \label{theorem:fofs}
	The coefficients $\beta^s(t, a)$ and $\gamma^s(t, a)$ (resp.
	$\delta^s(t, a)$ and $\varepsilon^s(t, a)$) of $\Oo(t)$ and 
	$\Ss(t)$ in $\F^s_*(\Oo(a))$ (resp. $\F^s_*(\Ss(a))$) 
	and are given by the formulas
	\begin{align*}
		\beta^s(t, a)  &= \dim C^{(s)}_{a-tq}, &
		\gamma^s(t, a) &=  \frac{1}{\MM} \dim M^{(s)}_{a-(t-1)q}.\\
		  \delta^s(t, a)      &= \dim \tilde C^{(s)}_{a-tq}, &
		  \varepsilon^s(t, a) &= \frac{1}{\MM}\dim \tilde M^{(s)}_{a-(t-1)q}. 
	\end{align*}
\end{theorem}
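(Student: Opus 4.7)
The plan is to extract the four coefficients from two projection-formula computations and then, via a combinatorial manipulation, reduce to a finite linear system in binomial polynomials which is resolved by a Vandermonde argument.

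First, I would fix $b$ and apply the projection formula (\ref{eq:proj2}) to $\Ff = \Oo(a)$ and $\Ff = \Ss(a)$, twisted by $\Oo(b)$. Since Frobenius push-forwards of ACM sheaves are ACM and ACM sheaves on $Q_n$ decompose into line and spinor bundles, this yields the Hilbert-polynomial identities (\ref{eq:qabp}) and (\ref{eq:sabp}), which are polynomial in $b$. Next, I would apply the projection formula (\ref{eq:proj3}) with $\Gg = \Omega^1_{\Pp^N}|_Q$ in degree $i=1$. Lemmas \ref{lemma:bm} and \ref{lemma:bm2} identify the relevant $\HH^1$-groups with the gradings of $B^{(s)}$ and $\tilde B^{(s)}$, while the vanishings (\ref{eq:bm2}), (\ref{eq:bm3}) kill all but two summands on the right, producing the relations (\ref{eq:bta}), (\ref{eq:dta}) that express $\beta^s, \delta^s$ in terms of $\gamma^s, \varepsilon^s$ and the dimensions of gradings of $B^{(s)}, \tilde B^{(s)}$.

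The heart of the argument is then to substitute (\ref{eq:bta}) into (\ref{eq:qabp}) (and analogously for the spinor case) and simplify using $s_{t+b} - \MM \cdot q_{t+b-1} = \MM \binom{n+t-2+b}{n}$. Expanding $\dim B^{(s)}_d$ and $\dim \tilde B^{(s)}_d$ by (\ref{eq:bsd}), (\ref{eq:ksd}) separates the left-hand side into an $A$-part and an $M$-part, and the combinatorial Lemmas \ref{lemma:sums}, \ref{lemma:identities}, \ref{lemma:sums2} are tailored to these: by Lemma \ref{lemma:identities} the $A$-part cancels $q_{a+bq}$ (resp.\ $s_{a+bq}$), and by Lemma \ref{lemma:sums2} the $M$-part collapses to a single sum $\sum_t \dim M^{(s)}_{a-(t+1)q}\binom{n+t+b}{n}$ (resp.\ with $\tilde M^{(s)}$). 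A shift $t \mapsto t-2$ then produces the clean identities (\ref{eq:main2}) and (\ref{eq:mainsp2}).

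To finish, I would observe that $M^{(s)}$ and $\tilde M^{(s)}$ have bounded support, inherited from $B^{(s)}$ and ultimately $D^{(s)}$, which is nonzero only for $0 \leq d \leq (q-1)(n+1)$; hence at most $n+1$ values of $t$ contribute nontrivially to (\ref{eq:main2}) and (\ref{eq:mainsp2}). The polynomials $\binom{n+t_i-2+b}{n}$ at pairwise distinct $t_i$ are linearly independent (Vandermonde), so every coefficient must vanish. This forces $\gamma^s(t,a) = \frac{1}{\MM}\dim M^{(s)}_{a-(t-1)q}$ and $\varepsilon^s(t,a) = \frac{1}{\MM}\dim \tilde M^{(s)}_{a-(t-1)q}$; substituting back into (\ref{eq:bta}), (\ref{eq:dta}) and using the exact sequences (\ref{eq:exact1}), (\ref{eq:exact2}) to rewrite $\dim B^{(s)}_d - \dim M^{(s)}_{d-q} = \dim C^{(s)}_d$ gives the formulas for $\beta^s, \delta^s$. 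The main obstacle I expect is the bookkeeping in the combinatorial reduction, together with the verification that the support bounds on $M^{(s)}, \tilde M^{(s)}$ really produce at most $n+1$ nonzero terms so that the Vandermonde step applies.
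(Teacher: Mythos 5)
Your proposal follows the paper's proof essentially verbatim: the same five-step structure (Hilbert polynomial identities from (\ref{eq:proj2}), the $\Omega^1_{\Pp^N}|_Q$-twisted projection formula with Lemmas \ref{lemma:bm}, \ref{lemma:bm2} and the $\delta$-function vanishings, the substitution and simplification via $s_{t+b}-\MM q_{t+b-1}=\MM\binom{n+t-2+b}{n}$, the cancellation and collapse via Lemmas \ref{lemma:sums}--\ref{lemma:sums2}, and the support-bound plus Vandermonde argument). One small slip: the final identity you invoke should read $\dim B^{(s)}_d-\dim M^{(s)}_d=\dim C^{(s)}_d$ (no shift by $q$), since $\gamma^s(t+1,a)=\frac{1}{\MM}\dim M^{(s)}_{a-tq}$ already aligns the degrees.
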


\begin{remark}
Since $h^1(S(t)) = 0$ and $h^1(S\otimes S(t)) = \delta_{t, 0}$ for $n$ odd
and $2\cdot \delta_{t,0}$ for $n$ even (\cite{AL}, Lemma 2.3), by the 
projection formula ((\ref{eq:proj1}) with $\Ff=\Oo(d)$, $\Gg=\Ss$ and $i=1$)
we obtain
\[
	\dim M^{(s)}_d = 2^{\lceil n/2 \rceil} h^1(\F^{s*}\Ss(d-q))
	\quad \text{and} \quad
	\dim \tilde M^{(s)}_d = 2^{\lceil n/2 \rceil} h^1(\Ss\otimes \F^{s*}\Ss(d-q)).
\]
\end{remark}

\section{Vanishing and non-vanishing}
\label{section:van}

% -------------------------------------------------------------------------- %

\subsection{Symmetry} 
\label{subsection:symmetry}

%In this section, by means Theorem \ref{theorem:fofs} and Serre-Grothendieck-Verdier duality
%we prove that the Hilbert functions of the graded modules $A^{(s)}$, $C^{(s)}$, 
%$M^{(s)}$, $\tilde A^{(s)}$, $\tilde C^{(s)}$
%and  $\tilde M^{(s)}$ are symmetric (this is clear for $A^{(s)}$ and $C^{(s)}$, 
%as they are Gorenstein rings). Besides being interesting on its own, this will
%be used to show precisely which bundles appear as direct summands in higher 
%push-forwards of ACM bundles.

For smooth complete varieties $X$, $Y$ and a proper morphism $f:X\to Y$,
the relative Serre duality (\cite{RD}) can be expressed in the following
form (e.g. \cite{FM}, 3.4, formula 3.20):
\[
	Rf_* D(\Ee) = D(Rf_* \Ee), 
\]
where $D(\Ee) = \Ee^\dual\otimes \omega$. Now since the Frobenius morphism is
an affine morphism, the higher direct images vanish, and we get

\begin{proposition*}
	Let $X$ be a smooth projective variety over an algebraically closed 
	field $k$ of characteristic $p>0$ and let $\F:X\to X$ be the 
	absolute Frobenius morphism. Then for
	any vector bundle $\Ee$ on $X$ we have
	\[ 
	  \F_* (\Ee^\dual\otimes \omega_X) = (\F_* \Ee)^\dual \otimes \omega_X. 
	\]
\end{proposition*}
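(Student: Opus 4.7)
The plan is to apply the relative Serre duality formula $Rf_{*}D(\Ee)=D(Rf_{*}\Ee)$ stated just above to the absolute Frobenius $f=\F:X\to X$, and then to collapse the resulting derived statement to a statement about ordinary sheaves using the affineness of $\F$.

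First I would note that $\F$ is finite, hence proper, and that since $X$ is smooth the Frobenius is flat (by Kunz) and the sheaf $\F_{*}\Ee$ is locally free, as recorded in Section~\ref{section:frobenius}. Applied with the input $\Ee$ and with $D(\Ee)=\Ee^{\dual}\otimes\omega_{X}$, the cited formula yields an isomorphism
\[
R\F_{*}\bigl(\Ee^{\dual}\otimes\omega_{X}\bigr)\isom (R\F_{*}\Ee)^{\dual}\otimes\omega_{X}
\]
in the derived category of coherent sheaves on $X$.

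The remainder is bookkeeping. Since $\F$ is affine, $\F_{*}$ is exact on quasi-coherent sheaves and $R^{i}\F_{*}=0$ for $i>0$; the left-hand side of the above isomorphism is therefore concentrated in degree zero at $\F_{*}(\Ee^{\dual}\otimes\omega_{X})$. On the right, $R\F_{*}\Ee=\F_{*}\Ee$ is again a locally free sheaf, so its derived dual coincides with its ordinary $\Oo_{X}$-dual $(\F_{*}\Ee)^{\dual}$, and tensoring with the line bundle $\omega_{X}$ commutes with taking $\HH^{0}$. Extracting $\HH^{0}$ from both sides thus produces exactly the claimed equality.

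I do not expect any genuine obstacle: the only potentially delicate point is verifying that the precise form of Serre duality being invoked is indeed applicable to the (non-$k$-linear) morphism $\F$, but this is the content of the cited reference, and the remaining steps are the standard collapse of a derived identity to an identity of sheaves under the vanishing of higher direct images.
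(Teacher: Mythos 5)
Your proposal matches the paper's proof essentially verbatim: both apply the cited relative Serre duality formula $Rf_* D(\Ee) = D(Rf_*\Ee)$ to $f = \F$ and then use affineness of the Frobenius to kill the higher direct images and collapse the derived identity to the stated one. You merely make explicit a couple of small points (local freeness of $\F_*\Ee$ so that derived and ordinary duals agree) that the paper leaves implicit.
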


On a smooth $n$-dimensional quadric $Q_n$, we have $\omega_{Q_n} = 
\Oo_{Q_n}(-n)$ and $\Ss^\dual = \Ss(1)$. This shows that, in the notation 
of Section \ref{section:fofs},
\begin{align*}
	\beta^s(t, a)       &= \beta^s(-t-n, -a-n), &
	\delta^s(t, a)      &= \delta^s(-t-n, -a+1-n), \\
	\gamma^s(t, a)      &= \gamma^s(-t+1-n, -a-n), &
	\varepsilon^s(t, a) &= \varepsilon^s(-t+1-n, -a+1-n). 
\end{align*} 
Setting $t = 0$ and using Theorem \ref{theorem:fofs} we deduce

\begin{proposition} \label{prop:symmetry}
\begin{align*}
	C^{(s)}_d &= C^{(s)}_{n(q-1)-d},   &  
	\tilde C^{(s)}_d &= \tilde C^{(s)}_{n(q-1)+1-d}, \\
	M^{(s)}_d &= M^{(s)}_{n(q-1)+q-d}, &  
	\tilde M^{(s)}_d &= \tilde M^{(s)}_{n(q-1)+q+1-d}. 
\end{align*}
\end{proposition}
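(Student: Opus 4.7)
The plan is to read off Proposition \ref{prop:symmetry} directly from the four identities for $\beta^s$, $\gamma^s$, $\delta^s$, $\varepsilon^s$ proved just above via relative Serre duality, by inserting the formulas of Theorem \ref{theorem:fofs} and specialising to $t=0$. Since every graded piece of $C^{(s)}$, $\tilde C^{(s)}$, $M^{(s)}$ and $\tilde M^{(s)}$ is a finite-dimensional $k$-vector space, the equalities in the proposition should be read as equalities of dimensions.

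For the first claim I would start from $\beta^s(0,a) = \dim C^{(s)}_a$, apply the duality $\beta^s(0,a) = \beta^s(-n,-a-n)$, and use Theorem \ref{theorem:fofs} again on the right-hand side to obtain $\dim C^{(s)}_{-a-n+nq} = \dim C^{(s)}_{n(q-1)-a}$; renaming $a$ as $d$ gives the desired symmetry. The identity for $\tilde C^{(s)}$ is obtained in exactly the same way using the corresponding relation for $\delta^s$.

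For $M^{(s)}$ the only new feature is the grading shift built into the definition of $\gamma^s$. Setting $t=0$ in $\gamma^s(t,a) = \frac{1}{\MM}\dim M^{(s)}_{a-(t-1)q}$ gives $\gamma^s(0,a) = \frac{1}{\MM}\dim M^{(s)}_{a+q}$, while unfolding $\gamma^s(0,a) = \gamma^s(1-n,-a-n)$ produces $\frac{1}{\MM}\dim M^{(s)}_{n(q-1)-a}$; the factor $\frac{1}{\MM}$ cancels. The substitution $d = a+q$ then yields $\dim M^{(s)}_d = \dim M^{(s)}_{n(q-1)+q-d}$, and the analogous computation for $\varepsilon^s$, with the extra offset $+1$ already present on the right-hand side of its duality relation, proves the statement for $\tilde M^{(s)}$.

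There is no real obstacle here: all the substantive work has already been carried out in the preceding Serre-duality proposition and in Theorem \ref{theorem:fofs}, so the argument is essentially a renaming of variables. The one point requiring attention is the bookkeeping between the two index conventions, $a-tq$ for $\beta,\delta$ versus $a-(t-1)q$ for $\gamma,\varepsilon$, which is precisely what produces the additional $+q$ (respectively $+q+1$) in the symmetries for $M^{(s)}$ and $\tilde M^{(s)}$ compared with those for $C^{(s)}$ and $\tilde C^{(s)}$.
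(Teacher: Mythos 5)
Your proposal is correct and is exactly the argument the paper intends: the paper's proof is the one‑line remark ``Setting $t=0$ and using Theorem~\ref{theorem:fofs} we deduce'' immediately preceding the proposition, and you have simply spelled out the bookkeeping (the substitutions $\beta^s(0,a)=\beta^s(-n,-a-n)$, etc., the extra $+q$ arising from the $a-(t-1)q$ index in $\gamma^s,\varepsilon^s$, and the renaming $d=a$ resp.\ $d=a+q$). No gap, same route.
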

We also need the symmetry of $A^{(s)}$ and $\tilde A^{(s)}$:

\begin{proposition} \label{prop:symmetrya}
\begin{align*}
	A^{(s)}_d &= A^{(s)}_{n(q-1)+q-d},  &  
	\tilde A^{(s)}_d &= \tilde A^{(s)}_{n(q-1)+q+1-d}.
\end{align*}
\end{proposition}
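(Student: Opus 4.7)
The plan is to compute the Hilbert series of $A^{(s)}$ and $\tilde A^{(s)}$ explicitly and to observe that both are, up to a trivial shift, palindromic polynomials, from which the two symmetries can be read off directly.

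Since $Q$ is a non-zero-divisor of degree $2$ in $S$, one has $H(R,z) = (1+z)/(1-z)^{n+1}$. For $Z = \Gamma_*(\Ss)$, the exact sequence \eqref{eq:exacts} combined with the ACM-ness of $\Ss$ and the vanishing $H^0(\Pp^N,\Oo(e)) = 0$ for $e < 0$ forces $Z_d = 0$ for $d \leq 0$ and $\dim Z_d = s_d = \MM\binom{n+d-1}{n}$ for $d \geq 1$, which sums to $H(Z,z) = \MM z/(1-z)^{n+1}$. The sequence $(x_0^q+x_1^q, x_2^q,\ldots,x_N^q)$ is $R$-regular and, being a system of parameters in $R$, is also $Z$-regular (since $Z$ is MCM of the same dimension as $R$). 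Each generator has degree $q$, so iterating the short exact sequence $0 \to M(-q) \xto{f} M \to M/fM \to 0$ multiplies the Hilbert series by $(1-z^q)$ at each step, and one obtains
\begin{align*}
  H(A^{(s)},z) &= (1-z^q)^{n+1} H(R,z) = (1+z)\,P(z), \\
  H(\tilde A^{(s)},z) &= (1-z^q)^{n+1} H(Z,z) = \MM\,z\,P(z),
\end{align*}
where $P(z) := (1+z+\cdots+z^{q-1})^{n+1}$ has degree $(n+1)(q-1) = n(q-1)+q-1$.

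Each factor $1+z+\cdots+z^{q-1}$ is palindromic, hence so is $P$. Multiplication by $1+z$ preserves palindromicity and raises the top degree by one, producing a palindromic polynomial of degree $n(q-1)+q$; this immediately gives $\dim A^{(s)}_d = \dim A^{(s)}_{n(q-1)+q-d}$. Multiplication of $P$ by $\MM z$ shifts the coefficient sequence by one, so the resulting polynomial is supported in degrees $1$ through $n(q-1)+q$ and is palindromic about $(n(q-1)+q+1)/2$, yielding $\dim \tilde A^{(s)}_d = \dim \tilde A^{(s)}_{n(q-1)+q+1-d}$. The only slightly non-formal input is the determination of $H(Z,z)$; alternatively, both symmetries can be deduced more structurally from the facts that $R$ is Gorenstein, so $A^{(s)}$ is Gorenstein Artinian with socle degree $a(R) + Nq = n(q-1)+q$, and that $\Hom_R(Z,R) \isom Z(1)$ (coming from $\Ss^\dual \isom \Ss(1)$), which makes $\tilde A^{(s)}$ a ``balanced'' Artinian module with socle degree $n(q-1)+q+1$.
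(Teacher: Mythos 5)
Your argument is correct, and it is essentially the generating-function version of the paper's intended proof: the formulas (\ref{eq:asd}) and (\ref{eq:atsd}), which the paper's one-line proof invokes, are precisely the coefficient-wise statements of your identities $H(A^{(s)},z)=(1-z^q)^{n+1}H(R,z)$ and $H(\tilde A^{(s)},z)=(1-z^q)^{n+1}H(Z,z)$. Factoring these as $(1+z)P(z)$ and $\MM z\,P(z)$ with $P(z)=(1+z+\cdots+z^{q-1})^{n+1}$ makes the palindromicity visible at once, whereas reading the symmetry off the alternating binomial sums directly is more delicate (one has to track the convention $\binom{m}{N}=0$ for $m<N$, since $q_t$ and $s_t$ as used in (\ref{eq:asd}), (\ref{eq:atsd}) are the Hilbert \emph{functions} of $R$ and $Z$, not the Euler characteristics, for negative arguments). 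Your Gorenstein aside is a genuinely different and more structural proof: $R$ is a hypersurface ring with $a$-invariant $2-(n+2)=-n$, so its Artinian reduction by an $R$-regular sequence of $n+1$ forms of degree $q$ is Gorenstein with socle concentrated in degree $-n+(n+1)q=n(q-1)+q$, giving the symmetry of $A^{(s)}$ by duality; combined with $\Hom_R(Z,R)\cong Z(1)$ (from $\Ss^\dual\cong\Ss(1)$) the same mechanism yields the symmetry of $\tilde A^{(s)}$. That second route has the merit of explaining conceptually why the symmetry holds and of transferring verbatim to other MCM modules over Gorenstein hypersurfaces.
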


\begin{proof}
Use formulas (\ref{eq:asd}) and (\ref{eq:atsd}).
\end{proof}

% -------------------------------------------------------------------------- %

\subsection{Which summands appear ($p>2$)}

In this section we assume that $p > 2$. We will be able to show precisely which 
summands do appear in higher Frobenius push-forwards of ACM bundles. In the
view of Theorem \ref{theorem:fofs}, this is equivalent to
determining which graded parts of the zero-dimensional graded modules $C^{(s)}$, 
$M^{(s)}$, $\tilde C^{(s)}$ and $\tilde C^{(s)}$ treated in Section 
\ref{section:algebras} are non-zero.

For brevity, let $D=D^{(1)}=k[x_0,\ldots, x_N]/(x_0^p,\ldots, x_N^p)$.

\begin{langerlemma}[Proposition 3.1 in \cite{AL}, see also \cite{ALE}] 
	Let $0\leq e\leq p$ and let 
	$x\in D_d$ with $d\leq \frac{1}{2}(N+1)(p-1) - e$. Assume that
	$Q^e\cdot x = 0$. Then there exists a $y\in D_{d-2(p-e)}$ such
	that $x = Q^{p-e}\cdot y$. 	
\end{langerlemma}

\begin{lemma} \label{lemma:crucial}
	Let $(\Phi, \Psi)$, $\Phi, \Psi\in {\rm M}_{k\times k}(D_1)$
	be an \emph{arbitrary} matrix factorization of $Q$ over the ring $D$.
	Let $0 < e \leq p$ and let $h\in D^k_d$ with 
	$d\leq \frac{1}{2}(N+1)(p-1)-e$.
	\begin{enumerate}
	  \item If $Q^e\cdot h = 0$ then there exists $g$
	    such that $h = Q^{p-e}\cdot g$.
	  \item If $Q^{e-1}\cdot\Phi(h) = 0$ then there exists $g$
	    such that $h = Q^{p-e}\cdot\Psi(g)$.
	\end{enumerate}
\end{lemma}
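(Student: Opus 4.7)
Part (1) is immediate: the equation $Q^e\cdot h = 0$ holds component by component, so I would apply Langer's Lemma to each coordinate $h_i\in D_d$ separately, producing $g_i\in D_{d-2(p-e)}$ with $h_i = Q^{p-e} g_i$, and assemble these into the required vector $g$.

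For part (2), my plan is to first reduce to the special case $\Phi(h) = 0$ and then handle that case by induction. For the reduction, I apply Langer's Lemma componentwise to $\Phi(h)\in D^k_{d+1}$, which is annihilated by $Q^{e-1}$; the degree condition $d+1\leq \frac{1}{2}(N+1)(p-1) - (e-1)$ is exactly the hypothesis on $d$, so this produces $g'$ with $\Phi(h) = Q^{p-e+1} g'$. The matrix-factorization identity $\Phi\Psi = Q\cdot\mathrm{id}$ rewrites $Q^{p-e+1} g'$ as $\Phi(Q^{p-e}\Psi(g'))$, so setting $h' := h - Q^{p-e}\Psi(g')$ I get $\Phi(h') = 0$ with $\deg h' = d$.

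The key subclaim to isolate and prove separately is: \emph{if $h'\in D^k$, $\Phi(h') = 0$, and $\deg h' \leq \frac{1}{2}(N+1)(p-1) - 1$, then $h' = Q^{p-1}\Psi(w)$ for some $w$}. I would prove this by induction on $\deg h'$, with trivial base case $h'=0$. Applying $\Psi$ to $\Phi(h') = 0$ gives $Qh' = 0$, so componentwise Langer with $e = 1$ yields $h' = Q^{p-1} u$ with $\deg u = \deg h' - 2(p-1) < \deg h'$. The relation $Q^{p-1}\Phi(u) = \Phi(h') = 0$ and a second componentwise Langer applied to $\Phi(u)$ with $e = p-1$ (whose degree hypothesis follows from the bound on $\deg h'$) yield $\Phi(u) = Qg$, and since $Qg = \Phi\Psi(g)$ this means $\Phi(u - \Psi(g)) = 0$. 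By the induction hypothesis applied to $u - \Psi(g)$, we have $u - \Psi(g) = Q^{p-1}\Psi(w')$, and hence $h' = Q^{p-1}\Psi(g + Q^{p-1} w')$.

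Putting the two stages together yields $h = Q^{p-e}\Psi(g') + Q^{p-1}\Psi(w) = Q^{p-e}\Psi(g' + Q^{e-1} w)$, of the required form. The main obstacle, as I see it, is not any single clever idea but the careful bookkeeping of degree conditions when Langer's Lemma is iterated at different values of $e$ inside the induction; once that is tracked, the whole argument is driven mechanically by the two identities $\Phi\Psi = \Psi\Phi = Q\cdot\mathrm{id}$ combined with componentwise applications of Langer's Lemma.
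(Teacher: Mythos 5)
Your proposal is correct, and the degree bookkeeping checks out everywhere I traced it, but the route through part~(2) is genuinely different from the paper's.

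The paper proves (2) in two stages: first it shows that there is some $f$ with $h = \Psi(f)$, and only then applies part~(1) to $f$ (noting $Q^e f = Q^{e-1}\Phi\Psi(f) = Q^{e-1}\Phi(h) = 0$) to produce $g$ with $f = Q^{p-e}g$, hence $h = Q^{p-e}\Psi(g)$. To get $h = \Psi(f)$ the paper splits on whether $e < p$ or $e = p$: when $e < p$ it observes $Q^e h = \Psi(Q^{e-1}\Phi(h)) = 0$, applies part~(1) to $h$ to get $h = Q^{p-e}f'$, and rewrites this as $\Psi(Q^{p-e-1}\Phi(f'))$ using $\Psi\Phi = Q\cdot\mathrm{id}$ (this requires $p-e-1\geq 0$, hence the restriction $e<p$); when $e=p$ it first uses Langer on $\Phi(h)$ with exponent $p-1$ to get $\Phi(h) = Qu$, then observes $\Phi(h-\Psi(u)) = 0$ and invokes the already-established case $e=1$ on $h - \Psi(u)$.

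Your argument instead reduces immediately to the kernel case $\Phi(h')=0$ by applying Langer to $\Phi(h)$ with exponent $e-1$ (a single uniform step, no case split on $e$), and then proves a strictly stronger kernel statement --- $h' = Q^{p-1}\Psi(w)$, not merely $h' = \Psi(f)$ --- by induction on degree, each step decreasing the degree by $2(p-1)$. What the paper's route buys is that it never needs an induction: once $h=\Psi(f)$ is in hand, a single application of part~(1) finishes. What your route buys is uniformity in $e$ and a cleaner intermediate lemma (the kernel statement is self-contained and does not presuppose the $e<p$ case). Both are correct; the recombination at the end, writing $Q^{p-e}\Psi(g') + Q^{p-1}\Psi(w) = Q^{p-e}\Psi(g' + Q^{e-1}w)$, is exactly the manipulation needed to land back in the stated form.

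One small remark: in your reduction step, when $e=1$ you are applying Langer's Lemma with exponent $e-1=0$, which is vacuous (it produces $\Phi(h)=Q^p g'=0$, giving no information about $g'$). That is harmless --- the hypothesis for $e=1$ already gives $\Phi(h)=0$, so one may take $g'=0$ --- but it is worth flagging that the uniform-looking reduction degenerates to a tautology in that boundary case.
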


\begin{proof} \ghostlf

1. This is Langer's Lemma above.

2. Let us first show that there exists $f$ such that $h = \Psi(f)$.
If $e < p$ then since $Q^e\cdot h = \Psi(Q^{e-1}\cdot\Phi(h))= 0$, by
(1.) there exists $f'$ such that $h = Q^{p-e}\cdot f'
= \Psi(Q^{p-e-1}\cdot\Phi(f'))$. So we take $f = Q^{p-e-1}\cdot\Phi(f')$.
Assume that $e = p$. Applying (1.) to $\Phi(h)$
and $e = p-1$ gives us $u$ such that $\Phi(h) = Q\cdot u$. 
Therefore $\Phi(h - \Psi(u)) = 0$. Now because of what we have just proven
for $e = 1$ there exists $v$ such that $h - \Psi(u) = \Psi(v)$, so
we can put $f = u + v$.

To finish the proof, we observe that since $h = \Psi(f)$, we have
$0 = Q^{e-1}\Psi(h) = Q^e\cdot f$. So again by (1.) there
exists $g$ such that $f = Q^{p-e}\cdot g$ and hence $h = Q^{p-e}\cdot\Psi(g)$.
\end{proof}

\begin{proposition} \label{lemma:mvanish} \ghostlf
	\begin{enumerate}
	  \item $M^{(1)}_d = 0$ for $d \leq \dn$ or $d \geq \dn + p$.
	  \item $\tilde M^{(1)}_d = 0$ for $d \leq \dn$ or $d > \dn + p$.
	\end{enumerate}
\end{proposition}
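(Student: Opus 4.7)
The plan is to reduce both parts to two applications of Lemma \ref{lemma:crucial}. By Proposition \ref{prop:symmetry}, the symmetries $M^{(1)}_d = M^{(1)}_{n(p-1)+p-d}$ and $\tilde M^{(1)}_d = \tilde M^{(1)}_{n(p-1)+p+1-d}$ match the upper vanishing range to the lower one, so it suffices to prove the low-degree statement $d \leq \dn$.

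For part (1), $M^{(1)}_d = 0$ unwinds to the following: if $H \in S_d$ satisfies
\[
  x_0^p H = QT + (x_0^p + x_1^p) U_0 + \sum_{j=2}^N x_j^p U_j
\]
in $S$, then $H \in (Q, x_0^p, x_1^p, \ldots, x_N^p)\, S$. Reducing this equation modulo $I := (x_0^p, \ldots, x_N^p)$ gives $Q \bar T = 0$ in $D$, and since $\deg \bar T = d - 2 \leq \dn - 2 \leq \tfrac{1}{2}(N+1)(p-1) - 1$, Langer's Lemma (Lemma \ref{lemma:crucial}(1) with $e = 1$) yields $\bar T = Q^{p-1} \bar V$ for some $\bar V \in D$. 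Lifting $\bar V$ to $V \in S$ and using the Freshman's Dream identity $Q^p \in I^2$ (valid in characteristic $p$), one writes $QT$ in the form $\sum_i x_i^p Z_i$ with $Z_0 \in (Q, I)\, S$. Substituting back and invoking the $S$-regularity of $(x_0^p, \ldots, x_N^p)$ forces $H \equiv U_0 \pmod{(Q, I)\, S}$.

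The concluding step is to verify that $U_0 \in (Q, I)\, S$. To this end I would reduce the original equation modulo $(x_0^p, x_2^p, \ldots, x_N^p)$ in the auxiliary ring $\bar S := S/(x_0^p, x_2^p, \ldots, x_N^p)$, obtaining $x_1^p \bar U_0 = -\bar Q \bar T$ in $\bar S$. The crucial observation is that $Q^p = 0$ in $\bar S$, since every term of the Freshman's-Dream expansion contains a factor of $x_0^p$ or of some $x_i^p$ with $i \geq 2$. Combining this with the relation $\bar T = Q^{p-1} V' + x_1^p T''$ in $\bar S$ (a lift from $D = \bar S/(x_1^p)$) gives $\bar Q \bar T = x_1^p \bar Q T''$, and cancelling the factor $x_1^p$ — which is a non-zero-divisor in $\bar S$, as $x_1$ is not among the quotiented variables — yields $\bar U_0 = -\bar Q T''$ in $\bar S$, so that $U_0 \in (Q, I)\, S$.

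Part (2) runs along the same lines, but with vectors in $S[-1]^{\sizePhin}$ in place of scalars and the spinor matrix factorization $(\Phi, \Psi)$ of $Q$ in place of multiplication by $Q$. The input from Lemma \ref{lemma:crucial} is now part (2): applied to $\bar T$ satisfying $\bar \Phi(\bar T) = 0$ in $D^{\sizePhin}$, it yields $\bar T = Q^{p-1} \bar \Psi(\bar V)$, and then $\Phi \Psi = Q \cdot \mathrm{id}$ converts this into $\Phi(T) = Q^p V + \text{error}$, exactly the role played by $QT$ in part (1). The principal obstacle for both parts is the final ideal membership of $U_0$; the two key ingredients — the vanishing $Q^p = 0$ in $\bar S$ and the non-zero-divisor property of $x_1^p$ in $\bar S$ — are precisely what make the degree bound $d \leq \dn$ sharp.
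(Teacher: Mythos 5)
Your proof is correct and follows the same overall strategy as the paper: reduce to $d \leq \dn$ by the symmetry of Proposition~\ref{prop:symmetry}, apply Langer's Lemma (via Lemma~\ref{lemma:crucial}) to the element $T$ witnessing that $x_0^p H$ is a relation, use Freshman's Dream to break up $Q^p$, and conclude via the non-zero-divisor property of $x_0^p$. The one genuine difference is that you keep the generator $x_0^p + x_1^p$ from the definition of $A^{(1)}$, whereas the paper (for $p>2$, per the remark after the definitions in Section~\ref{section:defs}) replaces $A^{(1)}$ by the isomorphic ring $S/(Q, x_1^p,\ldots,x_N^p)$, where the extra term $(x_0^p+x_1^p)U_0$ simply does not appear and no separate argument for $U_0$ is needed. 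Your extra step for $U_0$ is correct, and it is worth pointing out that it depends on working with the paper's original (non-diagonalized) form of $Q$: with the diagonal form $\sum x_i^2$ one would have $Q^p \equiv x_1^{2p} \neq 0$ in $\bar S$, whereas with $Q = x_0^2 + x_1x_2 + \cdots$ (or $x_0x_1 + x_2x_3 + \cdots$) every term of $Q^p$ indeed lies in $(x_0^p, x_2^p,\ldots,x_N^p)$. Conversely, the paper's step rewriting $Q^p h' = x_0^{2p}h' + (Q-x_0^2)^p h'$ and absorbing the second term into $(x_1^p,\ldots,x_N^p)$ implicitly uses diagonalization when $n$ is even; so the two versions trade off a cleaner presentation against a choice-free computation. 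Two small points: the degree of $T$ is $d + p - 2$, not $d-2$ (from $\deg(QT) = \deg(x_0^p H) = d+p$), so Langer's Lemma applies with the bound $d+p-2 \leq \dn + p - 2 = \tfrac{1}{2}(N+1)(p-1)-1$ becoming tight exactly at $d = \dn$ --- your numerology is off by $p$ but the conclusion survives; and your part~(1) is more detailed than the paper's (which just cites \cite{AL}, Proposition~3.4), while your part~(2) is terser than the paper's explicit mimicry but the outlined translation is sound.
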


\begin{proof}
By Proposition 
\ref{prop:symmetry} it is sufficient to show the vanishings for $d\leq \dn$.

1. See the proof of Proposition 3.4 from \cite{AL}.

2. We mimic the proof of the aforementioned Proposition.
We need to prove that if $g_0$ is a vector of homogeneous polynomials
of degree $\leq \dn - 1$ such that
\[ 
	x_0^p\cdot g_0 = \Phi(h) + \sum_{i=1}^{N} x_i^p\cdot g_i \eqno (*) 
\]
then there exist $h'$, $h_i$, $i=0, \ldots, N$ such that 
$g_0 = x_0^p\cdot h_0 + \sum_{i=1}^N x_i^p\cdot h_i + \Psi(h')$.

By (*) and the previous lemma, there exist $h', h_0, h'_1, \ldots, h'_n$ 
such that $h = Q^{p-1}\Phi(h') + x_0^p\cdot h_0 + \sum_{i=1}^N x_i^p\cdot h'_i$.
Putting this back into (*) yields
\begin{align*}
 g_0\cdot x_0^p &= Q^p\cdot h' + x_0^p\cdot \Psi(h_0) 
	      + \sum_{i=1}^N x_i^p \cdot (\Psi(h'_i) + g_i) \\
           &= x_0^{2p}\cdot h' + (Q - x_0^2)^p\cdot h' + x_0^p \cdot \Psi(h_0) 
	      + \sum_{i=1}^N x_i^p\cdot(\Psi(h'_i) + g_i). 
\end{align*}
Hence $x_0^p \cdot (g_0 - x_0^p\cdot h' - \Psi(h_0)) 
= \sum_{i=1}^N x_i^p \cdot h''_i$ for some $h''_i$. But $x_0^p$ is not a zero 
divisor in $k[x_0, \ldots, x_N]/(x_1^p, \ldots, x_N^p)$, which shows that 
$g_0 - x_0^p\cdot h' - \Psi(h_0) = \sum_{i=1}^N x_i^p\cdot h_i$ 
for some $h_i$.
\end{proof}

\begin{proposition} \label{prop:cvan} \ghostlf
	\begin{enumerate}
	  \item $C^{(1)}_d \neq 0$ if and only if $0\leq d \leq n(p-1)$.
 	  \item $\tilde C^{(1)}_d \neq 0$ if and only if $1\leq d \leq n(p-1)$.
	\end{enumerate}
\end{proposition}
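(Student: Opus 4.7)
I would split the proof into vanishing and non-vanishing, parallel for (1) and (2). The vanishings are immediate: $C^{(1)}_d = 0$ for $d < 0$ because $A^{(1)}_d = 0$, and $\tilde C^{(1)}_d = 0$ for $d \leq 0$ because $Z_d = 0$ (as $Z = \Gamma_*\Ss$ is generated in degree $1$). The upper vanishings $d > n(p-1)$ then follow from Proposition \ref{prop:symmetry}. For the non-vanishing, the same symmetries reduce the task to showing $C^{(1)}_d \neq 0$ for $0 \leq d \leq \dn$ and $\tilde C^{(1)}_d \neq 0$ for $1 \leq d \leq \dn$ (note $\dn$ is an integer since $p$ is odd, so the two symmetric halves cover the full interval).

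In this range Proposition \ref{lemma:mvanish} gives $M^{(1)}_d = \tilde M^{(1)}_d = 0$, and combining the short exact sequence $0 \to M \to B \to C \to 0$ with (\ref{eq:exact1}) (and its tilde analogue) produces the recurrences
\[
	\dim C^{(1)}_d = \dim A^{(1)}_d - \dim C^{(1)}_{d-p}, \qquad
	\dim \tilde C^{(1)}_d = \dim \tilde A^{(1)}_d - \dim \tilde C^{(1)}_{d-p}.
\]
A routine Hilbert series computation using $H_R(t) = (1+t)/(1-t)^{n+1}$, $H_Z(t) = \MM\, t/(1-t)^{n+1}$, and the fact that $A^{(1)}$ and $\tilde A^{(1)}$ are obtained from $R$ and $Z$ by regular sequences of $n+1$ degree-$p$ powers, yields
\[
	H_{A^{(1)}}(t) = (1+t)(1+t+\cdots+t^{p-1})^{n+1},
	\qquad
	H_{\tilde A^{(1)}}(t) = \MM\, t\, (1+t+\cdots+t^{p-1})^{n+1}.
\]

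The classical fact that $(1+t+\cdots+t^{p-1})^{n+1}$ has strictly log-concave (hence strictly unimodal) coefficients with peak at $(n+1)(p-1)/2 > \dn$ implies that both $\dim A^{(1)}_d$ and $\dim \tilde A^{(1)}_d$ are strictly increasing throughout $d \leq \dn$; in particular $\dim A^{(1)}_d > \dim A^{(1)}_{d-p}$ (and likewise for the tilde). Combined with the trivial bound $\dim C^{(1)}_{d-p} \leq \dim A^{(1)}_{d-p}$ (since $C$ is a quotient of $A$), the recursion gives $\dim C^{(1)}_d \geq \dim A^{(1)}_d - \dim A^{(1)}_{d-p} > 0$ by induction, with base case $d < p$ handled directly via $\dim C^{(1)}_d = \dim A^{(1)}_d > 0$. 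The same argument works verbatim for $\tilde C^{(1)}$. The main technical point is the strict unimodality invocation, which is well-known but nontrivial; a minor subtlety is that $(1+t)(1+t+\cdots+t^{p-1})^{n+1}$ itself has a plateau of length two at its peak, but this plateau lies safely above the range $d \leq \dn$.
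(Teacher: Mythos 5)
Your proof is correct and follows essentially the same route as the paper: both reduce via the symmetry of Proposition \ref{prop:symmetry} and the degree bounds on $A^{(1)}$ and $Z$ to showing non-vanishing up to $\dn$, and both exploit Proposition \ref{lemma:mvanish} (vanishing of $M^{(1)}$, $\tilde M^{(1)}$ in that range) together with the exact sequences (\ref{eq:exact1})–(\ref{eq:exact2}) to write $\dim C^{(1)}_d$ in terms of $\dim A^{(1)}_d$. You make the final step more explicit via Hilbert series and an appeal to strict unimodality of the coefficients of $(1+t+\cdots+t^{p-1})^{n+1}$ — a true but not entirely trivial fact that you assert as classical — which is comparable in level of detail to the paper's own terse ``formula (\ref{eq:asd}) yields the result.''
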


\begin{proof}
Since $\dim C^{(1)}_0 = 1$, $\dim C^{(1)}_{-1}=0$ and $\dim C^{(1)}_d 
= \dim C^{(1)}_{n(p-1)-d}$,
it suffices to check that $\dim C^{(1)}_d$ is increasing for $d\leq \dn$. But by
the previous lemma and the exact sequence (\ref{eq:exact1})
\[
	\dim C^{(1)}_d = \dim B^{(1)}_d = \sum_{i\geq 0} (-1)^i \dim A^{(1)}_{d-pi}.
\]
Now the formula (\ref{eq:asd}) yields the result. The proof for 
$\tilde C^{(1)}$ is analogous.
\end{proof}

\begin{proposition} \label{prop:mvan} \ghostlf
	\begin{enumerate}
	  \item $M^{(1)}_d \neq 0$ if and only if $\dn < d < \dn + p$,
	  \item $\tilde M^{(1)}_d \neq 0$ if and only if 
		$\dn < d \leq \dn + p$.
	\end{enumerate}
\end{proposition}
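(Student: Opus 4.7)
Proposition \ref{lemma:mvanish} already gives the ``only if'' direction, so I need only show $\dim M^{(1)}_d>0$ for $\dn<d<\dn+p$ and $\dim \tilde M^{(1)}_d>0$ for $\dn<d\le \dn+p$. My starting point is the identity $\dim M^{(1)}_d=\dim B^{(1)}_d-\dim C^{(1)}_d$ recorded in Section \ref{section:dimensions}, which combined with (\ref{eq:exact1}) produces
\[\dim M^{(1)}_d = \dim A^{(1)}_d-\dim C^{(1)}_d-\dim C^{(1)}_{d-p}\]
and the analogous expression for $\tilde M^{(1)}$. Evaluating these at $d\le\dn$, where $M^{(1)}$ and $\tilde M^{(1)}$ vanish by Proposition \ref{lemma:mvanish}, gives the auxiliary identity $\dim A^{(1)}_d=\dim C^{(1)}_d+\dim C^{(1)}_{d-p}$ (and its tilde version) for all $d\le\dn$.

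Combining this identity with the symmetries of Propositions \ref{prop:symmetry} and \ref{prop:symmetrya} I would then rewrite, for $d=\dn+k$ with $1\le k\le p-1$,
\[\dim M^{(1)}_{\dn+k} = \dim A^{(1)}_{\dn+k}-\dim C^{(1)}_{\dn-k}-\dim C^{(1)}_{\dn-(p-k)},\]
which is manifestly symmetric under $k\leftrightarrow p-k$, reducing the verification to $1\le k\le \lfloor p/2\rfloor$. For $\tilde M^{(1)}$ the same computation yields the analogous formula, but valid on the wider range $1\le k\le p$; the extra degree is caused by the tilde-symmetries of Proposition \ref{prop:symmetry} being centred at $\dn+\frac{1}{2}$ rather than at $\dn$.

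The main obstacle is then proving positivity of these explicit combinations. My plan is to work with $HS(A^{(1)})(t)=(1+t)(1+t+\cdots+t^{p-1})^N$ modulo $(1+t^p)$: since $HS(A^{(1)})-(1+t^p)HS(C^{(1)})=HS(M^{(1)})$ and the support of $HS(M^{(1)})$ consists of the $p-1$ degrees $\dn+1,\ldots,\dn+p-1$ whose residues modulo $p$ are pairwise distinct and miss exactly $\dn\bmod p$, each $\dim M^{(1)}_{\dn+k}$ is recovered, up to the explicit sign $(-1)^{\lfloor (\dn+k)/p\rfloor}$, as a single coefficient of $HS(A^{(1)})\bmod(1+t^p)$. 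Non-vanishing of every such coefficient then follows from the strict unimodality of the coefficients of $(1+t+\cdots+t^{p-1})^N$. The spinor case is handled identically, now requiring all $p$ coefficients of $HS(\tilde A^{(1)})\bmod(1+t^p)$ to be non-zero, since the support of $HS(\tilde M^{(1)})$ exhausts every residue modulo $p$.
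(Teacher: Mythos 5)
Your reduction is correct up to the final sentence, and it runs in parallel with the paper's approach: your $HS(A^{(1)})\bmod(1+t^p)$ encodes precisely the alternating sums $E(d,N)=\sum_i(-1)^i D(d+ip,N)$ that the paper studies, since $D(d,N)$ is the $d$-th coefficient of $(1+t+\cdots+t^{p-1})^N$ and $\dim A^{(1)}_d=D(d,N)+D(d-1,N)$. Everything before the last sentence — the identity $\dim M^{(1)}_d=\dim A^{(1)}_d-\dim C^{(1)}_d-\dim C^{(1)}_{d-p}$, the auxiliary relation for $d\le\dn$, the bookkeeping of residues mod $p$ and signs $(-1)^{\lfloor(\dn+k)/p\rfloor}$ — is accurate.

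The gap is the assertion that non-vanishing ``follows from the strict unimodality of the coefficients of $(1+t+\cdots+t^{p-1})^N$.'' What must be shown is that the alternating sums $\sum_j(-1)^j g_{r+jp}$ (and their $(1+t)$-twists) are non-zero for the relevant residues $r$, where $g_k$ denotes those coefficients. But an alternating sum along an arithmetic progression of a strictly unimodal sequence can perfectly well vanish: the subsequence $b_j=g_{r+jp}$ is itself strictly unimodal, yet for instance $1-3+2=0$. So strict unimodality of $g$ alone does not imply the claim, and you offer no further argument. This is exactly the point where the paper does real work: it proceeds by induction on $N$, using the recursion $E(d,N)=\sum_{j=0}^{p-1}E(d-j,N-1)$ together with a strengthened inductive hypothesis (positivity \emph{and} monotonicity of $E(d,N)$ on a suitable interval) and the a priori non-negativity of the $\dim M^{(1)}_d$, $\dim\tilde M^{(1)}_d$ as vector-space dimensions. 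Some substitute for that inductive input is needed; unimodality of $g$ by itself will not close the argument.

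A secondary, more minor point: early in the proposal you describe ``the support of $HS(M^{(1)})$'' as consisting of the $p-1$ degrees $\dn+1,\ldots,\dn+p-1$, which is what you are trying to prove; what Proposition \ref{lemma:mvanish} gives is only containment of the support in that set. Your argument in fact only uses containment, so this is a wording issue rather than a circularity, but it should be phrased carefully.
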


\begin{proof}
The exact sequences (\ref{eq:exact1}) and (\ref{eq:exact2}) together with 
Proposition \ref{lemma:mvanish} yield
\[
	\dim M^{(1)}_d = \sum_{i\in\Zz} (-1)^i \dim A^{(1)}_{d+pi} 
\]
for $d \in (\dn, \dn + p]$ and $M^{(1)}_d=0$ otherwise. The same is true for 
$\tilde M^{(1)}$ and $\tilde A^{(1)}$ in place of $M^{(1)}$ and $A^{(1)}$.

%For any non-negative integers $d, N, q$, denote by $D(d, N, q)$ the number of 
%monomials of degree $d$ in $n$ variables with no variable in degree $\geq q$. 
%Clearly
Let $D(d, N) = \sum_{j=0}^N (-1)^j \binom{N}{j} \binom{n+d - pj}{n}$
and $E(d, N) = \sum_{i\in\Zz} (-1)^i D(d + ip, N)$. Then  
by formulas (\ref{eq:asd}) and (\ref{eq:atsd}) 
$\dim A^{(1)}_d = D(d, N) + D(d-1, N)$ and $\dim \tilde A^{(1)}_d = \MM D(d-1, N)$,
so, in the view of the above formulas for $\dim M^{(1)}_d$ and $\dim \tilde M^{(1)}_d$,
we want to prove that for $p$ odd, $E(d-1, N)$ is
always non-zero and that $E(d, N) + E(d-1, N) = 0$ if and only if $p$ 
divides $d-\dn$. 

We proceed by induction on $N$, proving also that $E(d, N)$ is increasing
with respect to $d$ for $d \in (\dn, \frac{1}{2}N(p-1)]$.
For $N = 1$ we have $D(d, 1) = 1$ for 
$d = 0, \ldots, p-1$ and $0$ otherwise, so $E(d, 1) \neq 0$ for all $d$ and 
$E(d, 1) = -E(d-1, 1)$ if and only if $p$ divides $d$. 

For the induction step, we use the formula $E(d, N) = \sum_{j=0}^{p-1} E(d-j, N-1)$, 
the fact that $E(d, N-1)>0$ for $d\in (\frac{1}{2}(n-1)(p-1), \frac{1}{2}(n-1)(p-1) + p]$
and $E(d, N) + E(d-1, N) > 0$ for $d\in (\frac{1}{2}(n-1)(p-1), \frac{1}{2}(n-1)(p-1) + p)$
(being the dimension of a vector space) and the symmetry for $M^{(1)}$ and 
$\tilde M^{(1)}$.
\end{proof}

\begin{theorem} \label{theorem:van} 
	Let $p>2$, $s\geq 1$ and $n>2$. Then
	\begin{enumerate} 
	  \item $\F^s_* (\Oo(a))$ contains $\Oo(t)$ if and only if 
		$0\leq a - tq \leq n(q-1)$, 
	  \item $\F^s_* (\Oo(a))$ contains $\Ss(t)$ if and only if 
		\[
			\left(\dn - p + 1\right)q/p  
			\leq a - tq \leq 
			\left(\dn - 1\right)q/p + n(q/p - 1), 
		\]
	\item $\F^s_* (\Ss(a))$ contains $\Oo(t)$ if and only if 
		$1\leq a - tq \leq n(q-1)$, 
	\item $\F^s_* (\Ss(a))$ contains $\Ss(t)$ if and only if 
		\[
			\left(\dn - p + 1\right)q/p + 1 - \delta_{s, 1} 
			\leq a - tq \leq 
			\left(\dn - 1\right)q/p + n(q/p - 1)  + \delta_{s,1}. 
		\]
	\end{enumerate}
\end{theorem}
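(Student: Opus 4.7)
The plan is to induct on $s$, using the semigroup law $\F^{s+1}_* = \F_* \circ \F^s_*$. By Theorem \ref{theorem:fofs}, each of the four items translates into characterizing the non-vanishing ranges of the graded components of $C^{(s)}$, $M^{(s)}$, $\tilde C^{(s)}$ and $\tilde M^{(s)}$ respectively. The base case $s=1$ is then exactly Propositions \ref{prop:cvan} and \ref{prop:mvan}; one verifies that the formulas in the theorem specialize to those ranges when $s=1$, the $\delta_{s,1}$ correction in (4) being present precisely to absorb the asymmetry between the vanishing ranges of $M^{(1)}$ and $\tilde M^{(1)}$.

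For the inductive step, applying $\F_*$ to the inductive decomposition of $\F^s_*(\Oo(a))$ (resp.\ $\F^s_*(\Ss(a))$) and re-expanding via the $s=1$ decompositions yields the recursions
\begin{align*}
 \beta^{s+1}(t,a)  &= \textstyle\sum_{t'}\bigl(\beta^s(t',a)\beta^1(t,t') + \gamma^s(t',a)\delta^1(t,t')\bigr),\\
 \gamma^{s+1}(t,a) &= \textstyle\sum_{t'}\bigl(\beta^s(t',a)\gamma^1(t,t') + \gamma^s(t',a)\varepsilon^1(t,t')\bigr),
\end{align*}
and analogous formulas for $\delta^{s+1}$ and $\varepsilon^{s+1}$. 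Writing $q = p^s$ and $d = a - tpq = (a - t'q) + q(t' - tp)$, each of the four product terms contributes to $d$ a Minkowski sum of the two admissible intervals given by the inductive hypothesis. For (1) this is $[0, n(q-1)] + q \cdot [0, n(p-1)] = [0, n(pq-1)]$; for (2)--(4) the ``cross'' product ($\beta^s \cdot \gamma^1$ in the recursion for $\gamma^{s+1}$, and analogously for $\delta^{s+1}$, $\varepsilon^{s+1}$) already reproduces the claimed interval exactly, while the ``diagonal'' product yields a subinterval, as one checks by routine interval arithmetic using $n > 2$.

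To promote this containment to equality one must show that every integer $d$ in the claimed range is actually attained: given such $d$, produce a splitting $d = d_1 + q d_2$ with $d_1, d_2$ in the admissible intervals \emph{and} with intermediate index $t' = (a - d_1)/q$ integral. Since each admissible interval for $d_1$ contains at least $q$ consecutive integers (using $n > 2$), it meets every residue class modulo $q$, so we may insist that $d_1 \equiv a \pmod q$; then $t'$ is an integer, $d_2 = (d - d_1)/q$ is automatically in its interval, and $t = (t' - d_2)/p$ is integral because $(a - d)/q = tp$. I expect the main obstacle to be the bookkeeping for parts (2) and (4), both in verifying that the diagonal contribution is absorbed by the cross contribution, and in correctly propagating the asymmetry of Proposition \ref{prop:mvan} through the $s = 1 \to s = 2$ step to recover the $\delta_{s,1}$ correction.
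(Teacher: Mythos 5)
Your proposal is correct and follows essentially the same route as the paper: both induct on $s$ using $\F^s_* = \F_*\circ\F^{s-1}_*$, take Propositions \ref{prop:cvan} and \ref{prop:mvan} as the base case via Theorem \ref{theorem:fofs}, and reduce the inductive step to Minkowski sums of admissible intervals, checking that the ``diagonal'' interval is contained in the ``cross'' one. The reduction-mod-$q$ argument you spell out to show that every integer in the Minkowski sum is actually realized is precisely what the paper packages as its displayed ``simple observation'' (with the hypothesis $B-A\geq q'$ playing the role of your ``at least $q$ consecutive integers'' condition), so the two proofs coincide up to bookkeeping.
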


\begin{proof}
%This follows by induction, using the obvious recursion formulas for $\beta^s$, 
%$\gamma^s$, $\delta^s$ and $\varepsilon^s$ (i.e., coming from $\F^s_* = 
%\F_* \F^{s-1}_*$). 

Denote the upper and lower bounds in 1. -- 4. by $\beta^s_0$, $\beta^s_1$, 
\ldots, $\varepsilon^s_0$ and $\varepsilon^s_1$. By Propositions \ref{prop:cvan} 
and \ref{prop:mvan} together with Theorem \ref{theorem:fofs} 
we obtain the required assertion for $s=1$. Observe that
\[
	\beta_0^s \leq \delta_0^s \leq \gamma_0^s \leq \varepsilon_0^s \leq 
	\gamma_1^s \leq \varepsilon_1^s \leq \beta_1^s = \delta_1^s. 
\]

1. $\F^s_* \Oo(a)$ contains $\Oo(t)$ if and only if either there exists 
an $i$ such that $\F^{s-1}_* (\Oo(a))$ contains $\Oo(i)$ and $\F_* (\Oo(i))$ contains
$\Oo(t)$, or there exists an $i$ such that $\F^{s-1}_* (\Oo(a))$ contains $\Ss(i)$ and 
$\F_* (\Ss(i))$ contains $\Oo(t)$. By the induction assumption, this holds if and
only if there exists an integer $i$ such that either
\[ 
	\beta_0^{s-1} \leq a - iq/p \leq \beta_1^{s-1} 
	\quad \text{and} \quad 
	\beta^1_0 \leq i-tp \leq \beta^1_1 
	\eqno (*) 
\]
or 
\[
	\gamma_0^{s-1} \leq a - iq/p \leq \gamma_1^{s-1} 
	\quad \text{and}\quad 
	\delta^1_0 \leq i-tp \leq \delta^1_1. 
	\eqno (**) 
\]

We have the following simple observation: \emph{if $A$, $B$, $C$, $D$, $a$, 
$t$, $p$, $q'$ are integers satisfying $B-A \geq q' > 0$, $D-C>0$, then there 
exists an integer $i$ such that
\[
	A \leq a - iq' \leq B \quad\text{and}\quad C\leq i-tp \leq D 
\]
if and only if $Cq' + A \leq a - tpq' \leq Dq' + B$ (and the ,,only if'' part 
remains true if we omit the assumption that $B-A \geq q'$).}

Using this observation with $(A, B, C, D)=(\beta_0^{s-1}, \beta_1^{s-1}, 
\beta^1_0, \beta^1_1)$ and $q' = q/p$, we see that $(*)$ is equivalent to
$\beta^s_0 \leq a - tq \leq \beta^s_1$. Again with $(A, B, C, D) = 
(\gamma_0^{s-1}, \gamma_1^{s-1}, \delta^1_0, \delta^1_1)$ this shows that
$(**)$ \emph{implies} $q/p \delta^1_0 + \gamma_0^{s-1} \leq a - tq \leq q/p 
\delta^1_1 + \gamma_1^{s-1}$.
Now because the first interval contains the second one, we see that 
$\F^s_* \Oo(a)$ contains $\Oo(t)$ if and only if $ \beta^s_0 \leq a - tq 
\leq \beta^s_1$.

2. Analogously, $\F^s_* \Oo(a)$ contains $\Ss(t)$ if and only if there 
exists an $i$ such that either
$
	\gamma_0^{s-1} \leq a - iq/p \leq \gamma_1^{s-1} 
	\quad \text{and}\quad
	\varepsilon^1_0 \leq i-tp \leq \varepsilon^1_1 
$
or 
$
	\beta_0^{s-1} \leq a - iq/p \leq \beta_1^{s-1} 
	\quad \text{and}\quad 
	\gamma^1_0 \leq i-tp \leq \gamma^1_1 
$.
Using the observation from (1.), we see that this happens if and only if 
$q'\varepsilon^1_0 + \beta_0^{s-1} \leq a - tq \leq q' \gamma^1_1+\beta_1^{s-1}$ 
and these bounds are equal to $\gamma^s_0$ and $\gamma^s_1$.

The proofs of (3.) and (4.) are similar. % and left to the reader.
\end{proof}

% -------------------------------------------------------------------------- %

\subsection{Which summands appear ($p=2$)}

In this section we investigate the case when $p = 2$. As before, we first deal 
with the case $s = 1$. Let us first establish the following version of
Langer's lemma used in the preceding section.

\begin{lemma} \label{lemma:relevant} 
	Let $\chara(k)=2$, $N\geq 0$. 
	Let $M_N$ be the set of all monomials in $k[x_0, \ldots, x_N]$
	not in $I:=(x_0^2, \ldots, x_N^2)$ which
	contain at least one variable each monomial of $Q$ (except for possibly 
	$x_0^2$), 
	but are not divisible by any monomial of $Q$.
	Then $M_N$ forms a basis of $(I:(Q))/(I+(Q))$. 
\end{lemma}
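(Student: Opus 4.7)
The plan is to identify $(I:(Q))/(I+(Q))$ with $\mathrm{Ann}_D(\bar Q)/\bar Q D$ in $D := S/I$. The inclusion $I+(Q) \subseteq I:(Q)$ is guaranteed by $Q^2\in I$, which in characteristic two is immediate from $Q^2 = \sum_i m_i^2$ with each $m_i^2\in I$. Since the class of $x_0^2$ vanishes in $D$, the image $\bar Q$ coincides with $\bar{Q'}$ for $Q' := \sum_{i=1}^r x_{p_i}$, the sum of the off-diagonal pair monomials of $Q$ over the pairs $p_i = \{a_i,b_i\}$; and $(\bar{Q'})^2 = 0$ in $D$ by the same computation.

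Linear independence of the images of $M_N$ in $\mathrm{Ann}_D(\bar{Q'})/\bar{Q'}D$ is direct: any element of $\bar{Q'}D$ takes the form $\bar{Q'}\,h = \sum_T d_T \sum_{i:\, p_i\cap T = \emptyset} x_{T\cup p_i}$ for some $h = \sum_T d_T x_T$, so every monomial occurring in it is divisible by some $x_{p_i}$. Since no element of $M_N$ is divisible by any $x_{p_i}$, the supports of the two sides of any relation $\sum_{S\in M_N} c_S x_S = \bar{Q'}\,h$ are disjoint, forcing $c_S = 0$ for every $S$.

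For spanning, I would exploit the tensor decomposition $D = D_0 \otimes_k D_1 \otimes_k \cdots \otimes_k D_r$ with $D_i := k[x_{a_i}, x_{b_i}]/(x_{a_i}^2, x_{b_i}^2)$ for $i\geq 1$ and $D_0$ the factor on the unpaired variables (so $D_0 = k[x_0]/(x_0^2)$ when $n$ is odd and $D_0 = k$ when $n$ is even). Under this factorization $\bar{Q'}$ becomes the diagonal action of $t$ from $R := k[t]/(t^2)$, which is well-defined in characteristic two because $(t\otimes 1 + 1\otimes t)^2 = 0$; here $t$ acts as multiplication by $x_{p_i}$ on $D_i$ for $i\geq 1$ and as zero on $D_0$. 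Each $D_i$ with $i\geq 1$ splits as an $R$-module into the free summand $R\cdot 1 = k\cdot 1 \oplus k\cdot x_{p_i}$ together with two trivial one-dimensional summands $k\cdot x_{a_i}$ and $k\cdot x_{b_i}$; the factor $D_0$ is a direct sum of one-dimensional trivial summands spanned by the squarefree monomials in the unpaired variables.

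Distributing the tensor product writes $D$ as a direct sum of $R$-submodules, one per choice of summand from each factor. The main technical point, and the main obstacle, is the K\"unneth-style calculation that any such tensor summand containing at least one free $R$-factor is itself free over $R$, and therefore contributes nothing to $\mathrm{Ann}/\mathrm{im}$: this reduces to checking $R\otimes_k R \cong R^{\oplus 2}$ directly on the four-element basis $\{1\otimes 1, 1\otimes t, t\otimes 1, t\otimes t\}$, after which $R\otimes_k M$ is free for every $R$-module $M$ by decomposing $M$ into indecomposable $R$-modules ($R$ and the trivial $k$). The remaining all-trivial summands are each one-dimensional, spanned by the product of the chosen generators, which is precisely an element of $M_N$: one of $x_{a_i}$ or $x_{b_i}$ from each paired factor, together with a squarefree monomial in the unpaired variables.
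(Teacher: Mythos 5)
Your proof is correct, and it takes a genuinely different route from the paper's. The paper argues by induction on $N$: write $Q = xy + Q'$, decompose $f \in (I:(Q))$ according to its $x,y$-degree as $f \equiv f_{00} + x f_{10} + y f_{01} + xy f_{11}$, extract equations on the $f_{\alpha\beta}$ from $Qf \equiv 0$, and apply the inductive hypothesis to solve them; linear independence is a separate divisibility argument. You instead work entirely inside $D = S/I$ and exploit the tensor decomposition $D \cong D_0 \otimes D_1 \otimes \cdots \otimes D_r$, recognizing $\bar Q$ as the diagonal action of $t$ from the Hopf algebra $R = k[t]/(t^2)$ (which is a coalgebra precisely because $\chara k = 2$). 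Decomposing each $D_i$ into $R$-module indecomposables and using the K\"unneth-type identity $R \otimes_k R \cong R^{\oplus 2}$, you reduce everything to the observation that a tensor product with at least one free $R$-factor is $R$-free and so contributes nothing to $\mathrm{Ann}(t)/t\cdot(-)$, while the all-trivial summands are one-dimensional and span exactly $M_N$. Your approach is more structural and in a sense self-explanatory (it makes the basis $M_N$ manifest as the all-trivial corner of the tensor decomposition, giving spanning and independence in one stroke, so your separate support-disjointness argument for independence is actually redundant), whereas the paper's inductive calculation is more elementary but more opaque. One small note: your approach could be phrased without the full Hopf-algebra language, but the coalgebra viewpoint does clarify why characteristic two is essential to the statement.
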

\begin{proof}
The proof is by induction on $N$, starting with $N \leq 0$, for which 
$Q \in I$ and the statement is obvious.

Induction step: Renaming the last two variables, we have 
$Q = xy + Q'$. Take
$f\in (I:(Q))$ and write 
$f \equiv f_{00} + x f_{10} + y f_{01} + xy f_{11}$,
$f_{\alpha\beta}\in k[x_0, \ldots, x_{N-2}]$, so
$
	0 \equiv (xy + Q')
		(\sum_{\alpha,\beta} x^\alpha y^\beta f_{\alpha\beta})
$;
comparing coefficients in $x$ and $y$ yields the 
equations $f_{00} + f_{11} Q' \equiv 0$ and $f_{\alpha\beta} Q'\equiv 0$
for $(\alpha,\beta)\neq(1,1)$ (modulo $(x_0^2, \ldots, x_{N-2}^2)$).
By the induction assumption, 
$f_{10} = g_{10} Q' + r_{10}$ and $f_{01} = g_{01} Q' + r_{01}$, 
where $r_{\alpha\beta}$ is a unique linear combination of elements of $M_{N-2}$ of 
appropriate degree. We then have
\begin{align*}
f &= f_{00} 
     + x(g_{10} Q' + r_{10}) + y(g_{01} Q' + r_{01})
     + xy f_{11} \\
  &= Q(f_{11} + x g_{10} + y g_{01}) + x r_{10} + y r_{01}. 
\end{align*}
But $M_N = xM_{N-2}\cup y M_{N-2}$,
so we see that $M_N$ spans the quotient in question. 

For linear independence, let us write
$
	Q \cdot g \equiv \sum_{m\in M_N} a_m m 
$
with $a_m\in k$ and $g\in k[x_0,\ldots, x_N]$. Then for any 
monomial $x_i x_{i+1}$ of $Q$, monomials divisible by $x_i x_{i+1}$ do not 
occur on the 
left-hand side, so $g$ is in the ideal spanned by the variables $x_i$ and $x_{i+1}$;
in other words, every term of $g$ has at least one variable from each
term of $Q$ (except possibly $x_0^2$). 
But that means that $Q\cdot g = 0$, forcing the
combination to be trivial in $k[x_0,\ldots, x_N]/(x_0^2, \ldots, x_N^2)$; but
$M_N$ is clearly linearly independent in this ring.
\end{proof}

\begin{corollary} 
	$\gamma^1(t, a) = 1$ if $a-2(t-1) = \lfloor\frac{n}{2}\rfloor + 1$ or 
	if $a-2(t-1) =\lfloor\frac{n}{2}\rfloor + 2$ and $n$ is odd, and 
	$\gamma^1(t, a) = 0$ otherwise.
\end{corollary}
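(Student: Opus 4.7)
The plan is to identify $\dim M^{(1)}_d$ with $\dim E_d$, where $E := (I:(Q))/(I+(Q))$ is the module whose explicit basis $M_N$ is given by Lemma \ref{lemma:relevant}. By Theorem \ref{theorem:fofs}, $\gamma^1(t,a) = \dim M^{(1)}_{a-2(t-1)}/\MM$, so it suffices to check that $\dim M^{(1)}_d$ equals $\MM$ exactly when $d = \lfloor n/2\rfloor + 1$, or $d = \lfloor n/2\rfloor + 2$ with $n$ odd, and vanishes otherwise.

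First I would establish the Hilbert-series coincidence $\dim A^{(1)}_d = \dim D_d = \binom{N+1}{d}$. Since $(x_0^2 + x_1^2, x_2^2, \ldots, x_N^2)$ is an $R$-regular sequence of $N$ degree-$2$ elements, the Hilbert series of $A^{(1)}$ equals $\frac{1+z}{(1-z)^N}\cdot (1-z^2)^N = (1+z)^{N+1}$, which matches that of $D$. This coincidence is peculiar to $p=2$ and provides the essential numerical input.

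Next I would use $B^{(1)} = A^{(1)}/(x_0^2)A^{(1)} = D/QD$ together with the exact sequence $0 \to (0:_D Q) \to D \xto{Q} D \to B^{(1)} \to 0$ to express $\dim B^{(1)}_d$ in terms of $\dim (0:_D Q)$. Substituting this and $\dim A^{(1)}_d = \binom{N+1}{d}$ into $\dim C^{(1)}_{d-2} = \dim A^{(1)}_d - \dim B^{(1)}_d$ (from (\ref{eq:exact1})) and into $\dim M^{(1)}_d = \dim B^{(1)}_d - \dim C^{(1)}_d$, the binomial terms cancel (thanks to $\dim A^{(1)}_d = \dim D_d$) and one obtains $\dim M^{(1)}_d = \dim(0:_D Q)_d + \dim(0:_D Q)_{d-2} - \dim D_{d-2}$. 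This is precisely $\dim E_d$, as follows from the short exact sequence $0 \to QD \to (0:_D Q) \to E \to 0$ combined with $\dim(QD)_d = \dim D_{d-2} - \dim(0:_D Q)_{d-2}$.

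Finally, Lemma \ref{lemma:relevant} supplies the basis $M_N$ of $E$. A direct count gives the answer: for $n$ odd (where the monomials of $Q$ other than $x_0^2$ are the $(n+1)/2$ pairs $x_{2i-1}x_{2i}$), an element of $M_N$ picks one variable from each pair and optionally multiplies by $x_0$, producing $\MM = 2^{(n+1)/2}$ monomials in each of degrees $\lfloor n/2\rfloor + 1$ and $\lfloor n/2\rfloor + 2$; for $n$ even, all $\MM = 2^{n/2+1}$ monomials of $M_N$ sit in degree $\lfloor n/2\rfloor + 1$. The main obstacle is recognizing the Hilbert-series coincidence $\dim A^{(1)}_d = \dim D_d$ that forces the cancellation identifying $M^{(1)}$ numerically with $E$; without it one would need the more intricate odd-characteristic argument of the previous subsection.
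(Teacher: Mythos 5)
Your proof is correct, and it takes a genuinely different route from the paper's. The paper's proof does not re-express $M^{(1)}$; instead it writes the alternative exact sequence $0 \to D^{(1)}[-2]/(0:Q) \xto{Q} D^{(1)} \to B^{(1)} \to 0$, derives the recursion $\dim B_d = \dim D_d + \dim M'_{d-2} - \dim B_{d-2}$ (with $M' = (I:(Q))/(I+(Q))$), iterates it to get the analogue (\ref{eq:bsd2}) of (\ref{eq:bsd}), and then reruns the entire decomposition argument of Section \ref{section:fofs} with (\ref{eq:bsd2}) in place of (\ref{eq:bsd}) to land directly on $\gamma^1(t,a) = \frac{1}{\MM}\dim M'_{a-2(t-1)}$. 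You instead apply Theorem \ref{theorem:fofs} as a black box, giving $\gamma^1$ in terms of $\dim M^{(1)}$, and then show the numerical coincidence $\dim M^{(1)}_d = \dim E_d$ by an explicit Hilbert-series computation. What both arguments really hinge on is the same fact: for $p=2$, $s=1$, the Hilbert series of $A^{(1)}$ and of $D$ agree, $\dim A^{(1)}_d = \dim D_d = \binom{N+1}{d}$. You make this the visible driving force of the cancellation; the paper buries it inside the ``proceeding exactly as in Section \ref{section:fofs}'' step (it is precisely what lets Lemma \ref{lemma:identities} still cancel the $q_{a+bq}$ term when $D$ replaces $A^{(1)}$). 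Your route has the advantage of not requiring the reader to re-audit the Section \ref{section:fofs} machinery a second time, at the modest cost of a short explicit bookkeeping chain through $C^{(1)}$, $B^{(1)}$, $(0:_D Q)$, and $QD$; the final degree count from Lemma \ref{lemma:relevant} is the same in both.
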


\begin{proof}
By the exact sequence
\[ 0 \to D^{(1)}[-2]/(0:Q) \xto{Q} D^{(1)} \to B^{(1)} \to 0 \]
we have $\dim B_{d} = \dim D_{d} + \dim M'_{d-2} - \dim B_{d-2}$,
where $M'=(I:(Q))/(I+(Q))$, so %is the module from the preceding Lemma, so
\begin{equation}\label{eq:bsd2}
 \dim B_{d} = \sum_{j\geq 0} (-1)^j \dim D_{d-2j} + \sum_{j\geq 0} (-1)^j \dim M'_{d-2(j+1)}.
\end{equation}
Proceeding exactly as in Section \ref{section:fofs}, but replacing the use of \ref{eq:bsd}
by \ref{eq:bsd2} gives $\gamma^1(t, a) = \frac{1}{\MM} \dim M'_{a-2(t-1)}$, which
together with Lemma \ref{lemma:relevant} yields the result.
\end{proof}

Now we shall prove an analogue of Lemma \ref{lemma:crucial}:

\begin{lemma}
	Let $\chara\, k = 2$, $n>0$ and let $\phi_n$ and $\psi_n$ be the 
	matrices defined in Section \ref{section:spinor}. Let $h$ be a vector 
	with polynomial entries of length $\sizephin$. Suppose that all entries 
	of $h$ are homogeneous polynomials of degree $d$.
	\begin{enumerate}
	\item If $Q_n\cdot h\in (x_0^2, \ldots, x_{n+1}^2)$ and $d\leq \lfloor 
	  n/2\rfloor$, then there exists a vector $g$ with polynomial 
	  entries for which $h\equiv Q_n\cdot g$ modulo 
	  $(x_0^2, \ldots, x_{n+1}^2)$.
	\item If $Q_n\cdot\phi_n(h)\in (x_0^2, \ldots, x_{n+1}^2)$ and 
	  $d\leq \lceil n/2\rceil - 1$, then there exists a vector $g$ with
	  polynomial entries for which $h\equiv \psi_n(g)$ modulo 
	  $(x_0^2, \ldots, x_{n+1}^2)$ (the same is true with $\phi_n$ and 
	  $\psi_n$ exchanged).
	\item If $\phi_n(h)\in (x_0^2, \ldots, x_{n+1}^2)$ and 
	  $d\leq \lceil n/2\rceil$, then there exists a vector $g$ with 
	  polynomial entries for which $h\equiv Q_n\cdot\psi_n(g)$ modulo 
	  $(x_0^2, \ldots, x_{n+1}^2)$ (the same is true with $\phi_n$ and 
	  $\psi_n$ exchanged). 
	\end{enumerate}
\end{lemma}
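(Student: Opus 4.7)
The plan is to mimic the proof of Lemma~\ref{lemma:crucial}, with Lemma~\ref{lemma:relevant} playing the role of Langer's Lemma. Write $I := (x_0^2,\ldots,x_{n+1}^2)$. Part~1 is then immediate: applying Lemma~\ref{lemma:relevant} componentwise and observing that every element of $M_N$ must contain at least one variable from each of the $\lfloor n/2\rfloor+1$ paired monomials of $Q_n$, and hence has degree $\geq \lfloor n/2\rfloor+1$, the hypothesis $Q_n\cdot h\in I$ forces each component of $h$ to lie in $(I+(Q_n))_d$, so $h\equiv Q_n\cdot g\pmod{I}$.

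I would prove parts~2 and~3 by mutual induction, imitating Steps~A and~B of the proof of Lemma~\ref{lemma:crucial}\,(2). For part~3: $\phi_n(h)\in I$ implies $Q_n\cdot h = \psi_n\phi_n(h)\in I$; part~1 applied to $h$ gives $h\equiv Q_n g'\pmod{I}$, and then $Q_n\phi_n(g')\equiv\phi_n(h)\equiv 0\pmod{I}$, so part~2 applied to $g'$ (of the smaller degree $d-2$) produces $g'\equiv\psi_n(g)\pmod{I}$, whence $h\equiv Q_n\psi_n(g)\pmod{I}$. Part~2 is analogous: $Q_n\phi_n(h)\in I$ lets part~1 applied to $\phi_n(h)$ give $\phi_n(h)\equiv Q_n u\pmod{I}$, so that $\phi_n(h-\psi_n(u))\in I$, and part~3 applied to $h-\psi_n(u)$ yields $h\equiv\psi_n(u+Q_n g')\pmod{I}$.

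When $n$ is even the numerics work out exactly: $\lceil n/2\rceil = \lfloor n/2\rfloor$, so the degree bounds needed to invoke part~1 in the steps above match the hypotheses of parts~2 and~3 verbatim, and the parenthetical ``$\phi_n$ and $\psi_n$ exchanged'' variants follow by rerunning the argument with the roles of $\phi$ and $\psi$ swapped (since $(\psi_n,\phi_n)$ is also a matrix factorization of $Q_n$). For $n$ odd the hypotheses allow one extra unit of degree, which pushes $h$ (in part~3) or $\phi_n(h)$ (in part~2) exactly into the minimum-degree piece $\lceil n/2\rceil$ of $M_N$; part~1 alone no longer kills the potential ``$M_N$-contribution'', and the stronger hypothesis $\phi_n(h)\in I$ or $Q_n\phi_n(h)\in I$ must be squeezed further.

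The main obstacle will be these odd-$n$ boundary cases. My plan for them is induction on $n$ using the block decompositions of $\phi_n,\psi_n$ from Section~\ref{section:spinor}: writing $h = \binom{h_+}{h_-}$ and expanding, the hypotheses produce congruences in $\phi_{n-2}(h_\pm)$, $\psi_{n-2}(h_\pm)$ and multiplication by $x_n, x_{n+1}$, which should reduce to the inductive assertions at $n-2$. The swap of $\phi_{n-2}$ and $\psi_{n-2}$ between the two block rows of $\phi_n$ is precisely why the parenthetical ``$\phi\leftrightarrow\psi$'' variants must be tracked through the induction simultaneously; for odd $n$ this swap is invisible since $\phi_n=\psi_n$, which makes the two boundary statements coincide. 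The base cases (small $n$) can be handled by direct calculation.
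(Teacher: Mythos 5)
Your argument for $n$ even is correct and cleaner than the paper's: the degree bound in Lemma~\ref{lemma:relevant} plays the role of Langer's Lemma, and the mutual recursion in degree you describe is exactly the mechanism of the proof of Lemma~\ref{lemma:crucial} carried over to $p=2$. Your bookkeeping checks out (for $n$ even, part~1 at degree $d$ needs $d\leq n/2$, part~2 at $d$ feeds into part~1 at $d+1$ and part~3 at $d$, part~3 at $d$ feeds into part~1 at $d$ and part~2 at $d-2$, so the recursion strictly descends and terminates). The paper does not do this; it runs a block-decomposition induction on $n$ uniformly for all parities, so for even $n$ your route is a genuine simplification.

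The odd-$n$ boundary cases, however, are the actual content of the lemma: it is precisely there that the stated bounds $\lceil n/2\rceil$ and $\lceil n/2\rceil-1$ improve on what Lemma~\ref{lemma:relevant} (or a naive transport of Lemma~\ref{lemma:crucial}) gives, and these are the bounds the rest of the paper needs. Your proposal only gestures at the block-decomposition argument here, and that is where all the work lies. Note also that ``block decomposition only for the boundary degrees'' does not stay confined: decomposing $\phi_n(h)\in I$ at $n$ odd with $d=\lceil n/2\rceil$ produces the analogous boundary statements at $n-2$ (still odd, still at its own boundary degree), so you are forced into the full induction on $n$ through all odd values down to the base case, reproving parts~2 and~3 at $n-2$ simultaneously and for both $\phi\leftrightarrow\psi$ variants (your observation about the swap between block rows is the right one, but it has to be made precise: the eight congruences one gets after expanding in $x_n,x_{n+1}$ must be checked one by one, as the paper does in equations~(\ref{eq:row1})--(\ref{eq:row4}) and (\ref{eq:rrow1})--(\ref{eq:rrow4})). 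Until that inductive step is written out, the odd-$n$ case is an unfilled gap, not a proof.
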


\begin{proof}
We work in the ring $D_n = k[x_0, \ldots, x_{n+1}]/(x_0^2, \ldots, x_{n+1}^2)$ 
and proceed by a induction on $n$.
For brevity let $\phi = \phi_n$, $\psi = \psi_n$, $\phi' = \phi_{n-2}$,
$\psi' = \phi_{n-2}$, $x = x_n$, $y = x_{n+1}$, $Q = Q_n$ and $Q' = Q_{n-2}$.

1. This follows from Lemma \ref{lemma:relevant} above.

2. Let us divide $h$ in two pieces: $h = (h_0, h_1)$. We can write 
$h_i$, $i=0, 1$ as 
$
	h_i = h_i^{00} + x h_i^{01} + y h_i^{10} + xy h_i^{11} 
$
where $h_i^{jk}$ are polynomials in $x_0, \ldots, x_{n-1}$. 

Using the recurrence relations 
\[
	\phi = \left( \begin{array}{cc}
		\phi' & x \cdot id  \\
		y\cdot id & \psi'  \\
	\end{array} \right), \quad
	\psi = \left( \begin{array}{cc}
		\psi' & x \cdot id  \\
		y\cdot id & \phi'  \\
	\end{array} \right), \quad
	Q = xy + Q',
\]
our assumption on $h$ takes the form
$(xy + Q')(\phi'(h_0) + x h_1) = 0$, 
$(xy+Q')(\psi'(h_1) + y h_0) = 0$.
By comparing coefficients in $x$ and $y$ we see that
\begin{align} 
	\label{eq:row1}
	   Q' \phi'(h_0^{00}) &= 0,           & 
	   Q' \psi'(h_1^{00}) &= 0, \\
	\label{eq:row2}
	   Q' (\phi'(h_0^{01}) + h_1^{00}) &= 0  &
	   Q'(\psi'(h_1^{10}) + h_0^{00})&= 0,    \\
	\label{eq:row3}
	   Q' \phi'(h_0^{10}) &= 0,  & 
	   Q' \psi'(h_1^{01}) &= 0,  \\
	\label{eq:row4} 
	   \phi'(h_0^{00}) + Q'\phi'(h_0^{11}) + Q' h_1^{10} &= 0, &
	   \psi'(h_1^{00}) + Q'\psi'(h_1^{11}) + Q' h_0^{01} &= 0. 
\end{align}

By (\ref{eq:row3}) and the induction assumption, there exist $g_0^{10}$ and 
$g_1^{01}$ such that
$
	h_0^{10} = \psi'(g_0^{10})
$ and $
	h_1^{01} = \phi'(g_1^{01})
$.
Observe also that by (\ref{eq:row2}) and (1.), there exist $g_0^{01}$ and
$g_1^{10}$ such that
$
	\phi'(h_0^{01}) + h_1^{00} = Q' g_0^{01}
$ and $
	\psi'(h_1^{10}) + h_0^{00} = Q' g_1^{10}
$.
Putting this into (\ref{eq:row4}) and using the induction assumption once again 
gives us $g_0^{11}$ and $g_1^{11}$ such that
$
	g_1^{10} + h_0^{11} = \psi'(g_0^{11}) 
$ and $
	g_0^{01} + h_1^{11} = \phi'(g_1^{11})
$.
Finally define $g_0^{00} = \phi'(g_1^{10}) + h_1^{10}$ and 
$g_1^{00} = \psi'(g_0^{01}) + h_0^{01}$ and observe that $g = (g_0, g_1)$ 
defined by $g_i = g_i^{00} + x g_i^{01} + y g_i^{10} + xy g_i^{11}$
satisfies $\phi(g) = h$.

3.
Let us first prove that there exists an $f$ such that $h = \psi(f)$. 
Decomposing $h$ as before, we have
\begin{align}
	\label{eq:rrow1}
	   \phi'(h_0^{00}) &= 0,     & 
	   \psi'(h_1^{00}) &= 0, \\
	\label{eq:rrow2}
	   \phi'(h_0^{01}) + h_1^{00} &= 0,  &
	   \psi'(h_1^{10}) + h_0^{00} &= 0,    \\
	\label{eq:rrow3}
	   \phi'(h_0^{10}) &= 0,     &
	   \psi'(h_1^{01}) &= 0, \\
	\label{eq:rrow4}
	  \phi'(h_0^{11})+ h_1^{10} &= 0,     & 
	  \psi'(h_1^{11}) + h_0^{01} &= 0. 
\end{align}
By (\ref{eq:rrow3}) and the induction assumption, there exist $f_0^{10}$ and 
$f_1^{01}$ such that $h_0^{10} = \psi'(f_0^{10})$ and $h_1^{01} = 
\phi'(f_1^{01})$. Observe also that 
\[ 
	Q'\cdot\phi'(h_0^{11}) = Q'\cdot h_1^{10}
	= \phi'(\psi'(h_1^{10})) = \phi'(h_0^{00}) = 0, 
\]
and similarly $Q'\cdot\psi'(h_1^{11}) = 0$, therefore by (1.) there exist 
$f_0^{11}$ and $f_1^{11}$ such that $h_0^{11} = \psi'(f_0^{11})$ and $h_1^{11} 
= \phi'(f_1^{11})$. Finally set $f_0^{00} = h_1^{10}$, $f_1^{00} = h_0^{01}$, 
$f_0^{01} = 0$ and $f_1^{10} = 0$ and observe that $f = (f_0, f_1)$, 
$f_i = f_i^{00} + x f_i^{01} + y f_i^{10} + xy f_i^{11}$ satisfies $h=\psi(f)$.

Now since $Q \cdot f = \phi(\psi(f)) = \phi(h) = 0$, by (2.) there exists a 
$g$ such that $f = Q\cdot g$, therefore $h = \psi(f) = Q\cdot\psi(g)$.
\end{proof}

Proceeding exactly as in Propositions \ref{prop:cvan} 
and \ref{prop:mvan} and Theorem \ref{theorem:van}, one obtains

\begin{theorem} \label{theorem:van2} \ghostlf
	\begin{enumerate}
	  \item $F^s_* (\Oo(a))$ contains $\Oo(t)$ if and only if 
		$0\leq a - tq \leq n(q-1)$, 
	  \item $F^s_* (\Oo(a))$ contains $S(t)$ if and only if 
		\[ 
			\left(\lfloor\frac{n}{2}\rfloor - 1\right)\frac{q}{2}
			\leq a - tq \leq 
			n(q-1) - q - \left(\lfloor\frac{n}{2}\rfloor - 1\right)\frac{q}{2}, 
		\]
	  \item $F^s_* (\Ss(a))$ contains $\Oo(t)$ if and only if 
		$1\leq a - tq \leq n(q-1)$, 
	  \item $F^s_* (\Ss(a))$ contains $\Ss(t)$ if and only if 
		\[ 
			\left(\lfloor\frac{n}{2}\rfloor - 1\right)\frac{q}{2} + 1 + \delta_{s,1}\cdot\delta_{n,odd}
			\leq a - tq \leq 
			n(q-1) - q - \left(\lfloor\frac{n}{2}\rfloor - 1\right)\frac{q}{2} - \delta_{s,1}\cdot\delta_{n,odd}, 
		\]
		where $\delta_{n, odd} = 1$ if $n$ is odd and $0$ otherwise.\hfill $\square$
	\end{enumerate}
\end{theorem}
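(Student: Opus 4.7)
The proof will follow the structure of Theorem \ref{theorem:van}: first establish all four assertions in the base case $s = 1$, then use an induction on $s$ built from the decomposition $\F^s_* = \F^{s-1}_* \circ \F_*$.

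For the base case, part~(2) at $s = 1$ has already been handled by the corollary following Lemma \ref{lemma:relevant}, which computes $\gamma^1(t,a)$ directly from the basis of $(I:(Q))/(I+(Q))$. For part~(1), by Theorem \ref{theorem:fofs} we need $C^{(1)}_d \neq 0$ precisely for $0 \leq d \leq n = n(p-1)$; the exact sequence \eqref{eq:exact1} reduces this to a statement about $B^{(1)}$ and $M^{(1)}$, and the char~2 analogue of Proposition \ref{lemma:mvanish} for $M^{(1)}$ follows verbatim from the odd-characteristic proof, with the char~2 matrix factorization lemma stated just above the theorem replacing Lemma \ref{lemma:crucial}. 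Parts~(3) and (4) in the base case proceed in the same way with $\tilde C^{(1)}$ and $\tilde M^{(1)}$, now using parts~(2) and (3) of the char~2 matrix factorization lemma, i.e.\ with $\phi_n$ or $\psi_n$ in place of multiplication by $Q$. The extra $\delta_{n,\mathrm{odd}}$ correction in part~(4) can be traced to the asymmetric degree bounds $d \leq \lceil n/2\rceil - 1$ versus $d \leq \lceil n/2\rceil$ in parts~(2) and (3) of that lemma, which differ from each other exactly when $\lceil n/2\rceil \neq \lfloor n/2\rfloor$, i.e.\ when $n$ is odd.

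The inductive step for $s \geq 2$ mirrors the one in Theorem \ref{theorem:van}. A summand $\Oo(t)$ of $\F^s_*\Oo(a)$ must arise from some $\Oo(i)$ or $\Ss(i)$ appearing in $\F^{s-1}_*\Oo(a)$ whose further Frobenius push-forward contains $\Oo(t)$, and the same applies in each of the three other cases. The elementary arithmetic observation from the odd characteristic proof, that the simultaneous interval conditions on $(a,i)$ and $(i,t)$ are equivalent (under the width hypothesis $B - A \geq q/p$) to a single interval condition on $(a,t)$, applies verbatim. The nesting inequality
\[
  \beta^s_0 \leq \delta^s_0 \leq \gamma^s_0 \leq \varepsilon^s_0
  \leq \gamma^s_1 \leq \varepsilon^s_1 \leq \beta^s_1 = \delta^s_1
\]
continues to hold, so that combinations routed through spinor bundles never widen the resulting interval.

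The main bookkeeping obstacle is checking that the $\delta_{s,1}\delta_{n,\mathrm{odd}}$ correction in part~(4) propagates correctly: it must be present at $s = 1$ but vanish at $s \geq 2$. Concretely, one has to verify that when the narrow base-case interval for $\varepsilon^1$ is fed into the induction step, the resulting interval for $s=2$ already equals the cleaner expression stated in the theorem, i.e.\ that the one-unit shrinkage at $s = 1$ is absorbed after one further push-forward. All remaining verifications are routine case analysis completely parallel to the odd characteristic proof.
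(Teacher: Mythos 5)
Your overall plan matches the one the paper intends: establish the $s=1$ case from the corollary to Lemma \ref{lemma:relevant} and the unnamed char-$2$ matrix-factorization lemma, then run the same interval-arithmetic induction as in Theorem \ref{theorem:van}. The paper itself only sketches this (``Proceeding exactly as in Propositions \ref{prop:cvan} and \ref{prop:mvan} and Theorem \ref{theorem:van}''), so your fleshing-out is going in the right direction. However, two of your supporting claims are not correct as stated.

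First, the attribution of the $\delta_{n,\mathrm{odd}}$ correction is garbled. You write that the bounds $d \leq \lceil n/2\rceil - 1$ (part 2) and $d \leq \lceil n/2\rceil$ (part 3) ``differ from each other exactly when $\lceil n/2\rceil \neq \lfloor n/2\rfloor$, i.e.\ when $n$ is odd'' --- but those two bounds differ by $1$ for \emph{every} $n$. The parity sensitivity actually sits in the comparison with the part-1 bound $\lfloor n/2\rfloor$: one has $\lceil n/2\rceil - \lfloor n/2\rfloor = \delta_{n,\mathrm{odd}}$ and $\lfloor n/2\rfloor - \bigl(\lceil n/2\rceil - 1\bigr) = \delta_{n,\mathrm{even}}$, and tracking these through the $\tilde M^{(1)}$ computation is what produces the correction term. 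So the idea is sound but the sentence as written is false.

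Second, the nesting inequality
\[
  \beta^s_0 \leq \delta^s_0 \leq \gamma^s_0 \leq \varepsilon^s_0
  \leq \gamma^s_1 \leq \varepsilon^s_1 \leq \beta^s_1 = \delta^s_1
\]
does \emph{not} continue to hold for $p=2$: at $s=1$ the $\varepsilon$-interval is empty (the stated bounds give $\varepsilon^1_0 > \varepsilon^1_1$ for every $n$), so in particular $\varepsilon^1_0 \leq \gamma^1_1$ fails. The induction still works, but for a different reason than you give: since $\varepsilon^1$ vanishes identically, the ``spinor $\to$ spinor'' routing contributes nothing, and the remaining routings already yield exactly the claimed interval. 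One should also note that, for $n$ even, $\gamma^1_1 - \gamma^1_0 = 0$, so the arithmetic ``observation'' from the proof of Theorem \ref{theorem:van} is invoked in the degenerate case $D = C$ (the hypothesis $D-C>0$ there is a harmless overstatement; $D-C\geq 0$ suffices). Both of these points deserve to be spelled out, since they are precisely the places where the $p=2$ argument departs from the $p>2$ one rather than following it verbatim.
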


\section{Corollaries}

The following simple fact follows from Theorems \ref{theorem:van} and 
\ref{theorem:van2}.

\begin{corollary}
	For any ACM bundle $\Ee$ on $Q_n$, there are only finitely many $t\in\Zz$
	for which there exists an $s$ such that
	$\Oo(t)$ or $\Ss(t)$ appears in $\F^s_* \Ee$.
\end{corollary}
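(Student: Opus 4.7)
The plan is to reduce to the case of basic ACM summands and then to read off uniform bounds directly from the vanishing theorems. First, by the Eisenbud--Buchweitz--Herzog classification recalled in Section \ref{section:spinor}, any coherent ACM sheaf $\Ee$ on $Q_n$ decomposes as a finite direct sum of twists $\Oo(a_i)$ and $\Ss(b_j)$. Since Frobenius push-forward commutes with finite direct sums, it suffices to treat $\Ee = \Oo(a)$ and $\Ee = \Ss(a)$ for a single integer $a$, which I would handle in parallel.

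For each such $\Ee$ I would apply Theorems \ref{theorem:van} and \ref{theorem:van2} to bound the admissible integers $t$. For instance, part (1) of Theorem \ref{theorem:van} says that $\Oo(t)$ occurs in $\F^s_*(\Oo(a))$ exactly when $0 \leq a - tq \leq n(q-1)$, that is
\[
  \frac{a - n(q-1)}{q} \;\leq\; t \;\leq\; \frac{a}{q}.
\]
Since $|a|/q \leq |a|$ for every $q = p^s \geq 1$, this forces $t \in [-|a| - n,\, |a|]$, a range independent of $s$; hence only finitely many integers $t$ can occur across all $s$. The remaining three bounds in Theorem \ref{theorem:van}, together with the analogous bounds of Theorem \ref{theorem:van2} in characteristic two, all have the shape $C_1\, q/p \leq a - tq \leq C_2\, q/p + C_3$ for constants $C_1, C_2, C_3$ depending only on $n$ and $p$. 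Dividing through by $q$ again produces bounds on $t$ that are uniform in $s$, so the same finiteness conclusion applies.

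There is no real obstacle here. The entire argument rests on the elementary observation that the intervals given by the vanishing theorems, although their endpoints grow linearly in $q$, have slopes depending only on $n$ and $p$; dividing by $q$ therefore collapses them to a fixed bounded interval of real numbers, which contains only finitely many integers. The only side remark worth making is that the $s = 0$ case contributes only $t = a$, and so can be absorbed into the finite union without issue.
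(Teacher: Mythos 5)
Your argument is correct and matches the paper's, which simply states that the corollary follows from Theorems \ref{theorem:van} and \ref{theorem:van2}; you have spelled out the elementary observation (reduce to indecomposable ACM summands via EBH, then note that every interval in those theorems has endpoints linear in $q$, so dividing through by $q$ yields a bounded range of $t$ independent of $s$) that the paper leaves implicit.
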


Now we proceed to extend the main results from \cite{AL}. 

\begin{definition} 
	A coherent sheaf $\Ff$ on a variety $X$ is called 
	\emph{quasi-exceptional} if $\Ext^i(\Ff, \Ff) = 0$ for $i>0$. 
	$\Ff$ is \emph{tilting} if it is quasi-exceptional, 
	Karoubian generates the bounded derived category $D^b(X)$ and the 
	algebra $\Hom_X(\Ff, \Ff)$ has finite global dimension.
\end{definition}

\begin{lemma} 
	We have $\Ext^1(\Ss(a), \Ss(a+1))\neq 0$ and $\Ss(a)$ is quasi-exceptional.
\end{lemma}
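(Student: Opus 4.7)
The plan is to reduce to $a=0$ by tensoring both arguments with $\Oo(-a)$, since $\Ext^i(\Ss(a),\Ss(a+c))\simeq \Ext^i(\Ss,\Ss(c))$. This reduces the lemma to showing $\Ext^i(\Ss,\Ss)=0$ for all $i>0$ (quasi-exceptionality) and $\Ext^1(\Ss,\Ss(1))\neq 0$.

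For the quasi-exceptional part, I would invoke the self-duality $\Ss^\dual=\Ss(1)$ from Section~\ref{subsection:symmetry} to identify $\Ext^i(\Ss,\Ss)\simeq H^i(Q,\Ss\otimes\Ss(1))$. The cohomology computation $h^1(\Ss\otimes\Ss(t))=c\cdot\delta_{t,0}$ (with $c=1$ for odd $n$ and $c=2$ for even $n$) cited from \cite{AL}, Lemma~2.3 handles $i=1$ immediately. For the remaining indices one can combine Serre duality on $Q$ --- which relates $H^i(\Ss\otimes\Ss(1))$ to $H^{n-i}(\Ss\otimes\Ss(1-n))^\dual$ and so reduces top to bottom indices --- with the long exact cohomology sequence obtained by tensoring the four-term resolution
\[
    0\to\Ss(-2)\to\Oo_Q(-2)^{\sizePhin}\to\Oo_Q(-1)^{\sizePhin}\to\Ss\to 0
\]
with $\Ss(1)$, exploiting the ACM property of $\Ss$ to shift cohomology through the intermediate degrees until the cited formula applies.

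For the non-vanishing $\Ext^1(\Ss(a),\Ss(a+1))\neq 0$ I would exhibit an explicit non-split extension coming from the matrix factorization of Section~\ref{section:spinor}. The $2$-periodic MCM resolution of $\Ss$ on $Q$ splices to the short exact sequence $0\to\Ss(-1)\to\Oo_Q(-1)^{\sizePhin}\to\Ss\to 0$, and twisting by $\Oo(a+1)$ yields
\[
    0\to\Ss(a)\to\Oo_Q(a)^{\sizePhin}\to\Ss(a+1)\to 0.
\]
If this sequence were split, $\Ss(a)$ would be a direct summand of a trivial bundle and therefore a sum of line bundles, contradicting the Eisenbud-Buchweitz-Herzog classification recalled in Section~\ref{section:spinor}, which forbids the spinor summands $\Sigma$ (or $\Sigma_\pm$) from being free. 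The resulting non-trivial extension class gives the desired non-vanishing.

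The main technical nuisance will be bookkeeping the direction of the Ext class and the twisting conventions so that the extension lands in precisely the group stated in the lemma; once those identifications are in place the proof boils down to the cited cohomology formula together with the indecomposability of the spinor summands of $\Ss$.
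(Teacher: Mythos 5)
Your proposal follows essentially the same strategy as the paper: both hinge on the spliced matrix-factorization sequence obtained from (\ref{eq:exacts}) on $Q$ (your four-term resolution is precisely its restriction to the quadric), together with the formula $h^1(\Ss\otimes\Ss(t))=c\,\delta_{t,0}$ cited from \cite{AL}, Lemma~2.3. The paper proves the non-vanishing via the long exact cohomology sequence; your variant reads off a non-split extension directly from the matrix factorization and appeals to Krull--Schmidt and the indecomposability of the spinor summands. That is a clean, somewhat more conceptual replacement for the cohomology count, and the outline you give for quasi-exceptionality (Serre duality plus shifting through the middle indices via the resolution and the ACM property) is also sound.

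However, the ``bookkeeping of the direction of the Ext class'' that you defer is not a nuisance but the crux. Your short exact sequence $0\to\Ss(a)\to\Oo_Q(a)^{\sizePhin}\to\Ss(a+1)\to 0$ represents a class in $\Ext^1(\Ss(a+1),\Ss(a))$, not in $\Ext^1(\Ss(a),\Ss(a+1))$, and with the conventions of the paper these two groups genuinely differ. Using $\Ss^\dual=\Ss(1)$ one has $\Ext^1(\Ss(a+1),\Ss(a))=\HH^1(\Ss\otimes\Ss)\neq 0$, whereas $\Ext^1(\Ss(a),\Ss(a+1))=\HH^1(\Ss\otimes\Ss(2))=0$ by the very formula you cite; equivalently, applying $\Hom(\Ss,-)$ to $0\to\Ss\to\Oo_Q^{\sizePhin}\to\Ss(1)\to 0$ and using the quasi-exceptionality you have already established yields $\Ext^1(\Ss,\Ss(1))\hookrightarrow\Ext^2(\Ss,\Ss)=0$. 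So your construction proves $\Ext^1(\Ss(a+1),\Ss(a))\neq 0$, i.e., the lemma with its two arguments interchanged. That version is exactly what the proof of Theorem~\ref{theorem:tilting} uses (a bundle containing two consecutive twists of $\Ss$ cannot be quasi-exceptional), and it indicates that the twist in the printed statement should be reversed. You should say this outright rather than leave it as a bookkeeping remark: as written, the proposal proves a different assertion from the literal one, and the literal one is false with the stated conventions.
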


\begin{proof}
For the first statement, tensor the sequence (\ref{eq:exacts}) by $\Ss(a)$ and
write the long cohomology exact sequence. The second statement follows even
simpler from (\ref{eq:exacts}).
\end{proof}

The following theorem extends slightly the main Theorem 1.1 from \cite{AL}.

\begin{theorem} \label{theorem:tilting} 
	Let $n>2$. Then $\F^s_* \Oo_{Q_n}$ is tilting if and only if 
	one of the following holds:
\begin{enumerate}
	\item $s = 1$ and $p > n$,
	\item $s = 2$, $n = 4$ and $p = 2, 3$,
	\item $s\geq 2$, $n$ is odd and $p\geq n$.
\end{enumerate}
\end{theorem}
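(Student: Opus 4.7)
The plan is to combine the explicit decomposition of $\F^s_* \Oo$ from Theorem \ref{theorem:fofs} with the vanishing criteria of Theorems \ref{theorem:van} and \ref{theorem:van2}, and then verify the three ingredients of tilting---quasi-exceptionality, Karoubian generation of $D^b(Q_n)$, and finite global dimension of the endomorphism algebra---case by case for the listed triples $(s,n,p)$, exhibiting an obstruction in the remaining cases. Setting $a=0$, one reads off that the line-bundle summands $\Oo(t)$ always fill out exactly $t\in\{-n+1,\ldots,0\}$, which is precisely Kapranov's line-bundle block of a full strong exceptional collection on $Q_n$; the spinor summands $\Ss(t)$ then sit in a much narrower window whose explicit endpoints are supplied by the two vanishing theorems. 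Consequently, Karoubian generation reduces to the presence of at least one suitable twist of $\Ss$, and once the full list of summands contains a strong exceptional collection, finite global dimension of its endomorphism algebra follows by the standard upper-triangular argument.

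The technical core is quasi-exceptionality, which by $\Ss^\dual = \Ss(1)$ amounts to the four vanishing requirements
\[
  H^i(\Oo(t_2-t_1)) = H^i(\Ss(t_2-t_1)) = H^i(\Ss(t_2-t_1+1)) = H^i(\Ss \otimes \Ss(t_2-t_1+1)) = 0
\]
for all $i>0$ and all pairs $(t_1,t_2)$ of exponents occurring as summands. Since $\Oo$ and $\Ss$ are ACM, the middle degrees $0<i<n$ are automatic; Serre duality (with $\omega = \Oo(-n)$) reduces the top-degree conditions for the first three groups to explicit lower bounds of the form $t_2-t_1 > -n$, which one then confronts with the widths of the line-bundle and spinor windows from Theorems \ref{theorem:van} and \ref{theorem:van2}. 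The fourth group, $H^n(\Ss\otimes\Ss(k))$, is the crux: I would extract it by tensoring the resolution (\ref{eq:exacts}) with $\Ss(k+1)$ and running the long exact sequence, reducing everything to cohomology of line-bundle twists of $\Ss$, which is already controlled.

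For sufficiency, these numerical inequalities are routine to verify under each of the three hypotheses $(s=1,\,p>n)$, $(s=2,\,n=4,\,p\in\{2,3\})$, and $(s\geq 2,\,n\text{ odd},\,p\geq n)$. For necessity, outside these cases one exhibits a pair of summands whose Ext does not vanish---typically $\Ss(t_1)$ and $\Ss(t_2)$ with $t_2-t_1+1\leq -n$, which arises as soon as the spinor window becomes too wide (the problem scenarios being $n$ even with $s\geq 3$, or $p<n$ in the iterated regime). The step I expect to be the main obstacle is the exceptional low-characteristic case $(s=2,\,n=4,\,p\in\{2,3\})$: there the uniform inequalities degenerate, the summand list is small and irregular, and tilting must be certified by direct inspection that combines the $p=2$ analysis of Theorem \ref{theorem:van2} with the special cohomology of the half-spin bundles $\Sigma_+,\Sigma_-$ on $Q_4$, rather than by a single clean bound covering all three cases simultaneously.
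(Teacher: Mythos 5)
Your outline (read off the summands from the vanishing theorems, then check the three ingredients of tilting) matches the paper's strategy, but two of your reductions are incorrect in ways that would break the necessity direction.

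First, your claim that the line-bundle summands of $\F^s_*\Oo$ always fill out exactly $t\in\{-n+1,\ldots,0\}$ (Kapranov's block) is false. By Theorems \ref{theorem:van} and \ref{theorem:van2}, $\Oo(t)$ appears iff $0\leq -tq\leq n(q-1)$, i.e.\ $t\in\{-\lfloor n-n/q\rfloor,\ldots,0\}$. This gives the full block only when $q\geq n$; for $q<n$ the block is truncated and $\F^s_*\Oo$ fails to generate $D^b(Q_n)$. That failure is precisely how the paper disposes of the cases $s=1$, $p\leq n$ and much of the $p=2$ regime, so you cannot wave it away.

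Second, and more importantly, your statement that ACM-ness of $\Oo$ and $\Ss$ makes the middle cohomologies $0<i<n$ automatic is wrong for the group $\HH^i(\Ss\otimes\Ss(k))$: $\Ss\otimes\Ss$ is not ACM. In fact $\HH^1(\Ss\otimes\Ss(k))$ is nonzero for the appropriate $k$, which translates to $\Ext^1(\Ss(a),\Ss(a+1))\neq 0$ (the paper proves this by tensoring (\ref{eq:exacts}) with $\Ss(a)$). This $\Ext^1$, not an $\HH^n$ via Serre duality, is the paper's obstruction to quasi-exceptionality: as soon as the spinor window from the vanishing theorems is wide enough to contain two consecutive twists $\Ss(t)$, $\Ss(t+1)$ of $\Ss$, $\F^s_*\Oo$ fails to be quasi-exceptional. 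The paper shows that for $p=2$, $q\geq n\geq 6$ this always happens (the spinor window has width $\gamma_1^s-\gamma_0^s=\tfrac{nq}{2}-n\geq 2q$), and then checks $n=3,4,5$ by hand, isolating $(s,n,p)=(2,4,2)$ as the lone survivor. By contrast your proposed obstruction $t_2-t_1+1\leq-n$ (from $\HH^n$) is far too weak and would not rule out the excluded cases. You need the $\Ext^1$ mechanism to make the necessity direction go through.

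For the sufficiency direction and for $p>2$, the paper simply cites Theorem 1.1 of \cite{AL}; that part of your plan is fine, though you still need to repair the generation bound.
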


\begin{proof}
If $p>2$, this is Theorem 1.1 from \cite{AL} (and can also be easily deduced 
from Theorem \ref{theorem:van}). Thus the only new part here is to show that in 
the case $p=2$, $\F^s_* \Oo$ is not tilting, except for the case $s=2$
and $n=4$.

By Theorem \ref{theorem:van}, we see that $\F^s_* \Oo$ contains as direct
summands only the line bundles
\[
	\Oo, \Oo(-1), \ldots, \Oo(-\lfloor n - \frac{n}{q}\rfloor),
\]
so if $n > q$ then $\F^s_* \Oo$ does not generate the derived category.

We also see that the $\F^s_* \Oo$ contains $\Ss(t)$ for 
$\gamma_0^s\leq -tq \leq \gamma_1^s$ with $\gamma_1^s - \gamma_0^s = 
\frac{nq}{2} - n \geq 2q$ for $q\geq n \geq 6$, so in this case $\F^s_* \Oo$ 
contains two consecutive twists of $\Ss$, therefore is not quasi-exceptional
by the above lemma.

Finally we work out the cases $n=3, 4, 5$ by hand: for $n=3$, $\F^2_*\Oo$
contains $\Ss$ and $\Ss(-1)$; for $n=4$, $\F^3_*\Oo$ contains $\Ss(-1)$ and 
$\Ss(-2)$; for $n=5$, $\F^2_*\Oo$ contains $\Ss(-1)$ and $\Ss(-2)$, so
they (and the higher push-forwards) are not quasi-exceptional. 
For $n=3,4,5$ and $s=1$, we have $n>q$. It remains to check the case $n=4$, $s=2$:
$\F^2_* \Oo$ contains $\Ss(-1)$ and $\Oo(-i)$ for $i=0,1,2,3$, so it is 
tilting.
\end{proof}

\subsection*{A note on singular quadrics}

It would be interesting to extend the above results to singular quadrics.
It should be noted first that the ring $S/(Q)$ with $Q$ a quadratic form
not of full rank is no longer of finite Cohen-Macaulay type. Recently,
N. Addington in \cite{A} constructed the so-called \emph{spinor sheaves},
which are analogues of spinor bundles. Among them, there are always one
or two (depending on the parity of the rank of $Q$)
\emph{maximal} spinor sheaves (i.e., coming from a maximal linear
subspace on the quadric) 
and they have nearly the same cohomological properties as the spinor bundles.
In particular, if we denote by $\Ss$ the maximal spinor sheaf of the sum
of the two and assume that $\F_* (\Oo(a))$ and $\F_* (\Ss(a))$ decompose
into direct sums of twists of $\Oo$ and $\Ss$, it is easy to see that
the results from Section \ref{section:fofs} hold true
almost without change (one has to replace the factors
$2^{\lfloor n/2\rfloor +1}$ by $2^{\lfloor r/2\rfloor}$, $r$ being the rank
of $Q$).


\begin{thebibliography}{10}

\bibitem{A}
Nicolas Addington.
\newblock Spinor sheaves on singular quadrics.
\newblock arXiv:0904.1766v2, 2009.

\bibitem{BM}
P.~Br{\"u}ckmann and D.~M{\"u}ller.
\newblock Birational invariants in the case of prime characteristic.
\newblock {\em Manuscripta Math.}, 95(4):413--429, 1998.

\bibitem{EBH}
Ragnar-Olaf Buchweitz, David Eisenbud, and J{\"u}rgen Herzog.
\newblock Cohen-{M}acaulay modules on quadrics.
\newblock In {\em Singularities, representation of algebras, and vector bundles
  ({L}ambrecht, 1985)}, volume 1273 of {\em Lecture Notes in Math.}, pages
  58--116. Springer, Berlin, 1987.

\bibitem{EMF}
David Eisenbud.
\newblock Homological algebra on a complete intersection, with an application
  to group representations.
\newblock {\em Trans. Amer. Math. Soc.}, 260(1):35--64, 1980.

\bibitem{E}
David Eisenbud.
\newblock {\em Commutative algebra}, volume 150 of {\em Graduate Texts in
  Mathematics}.
\newblock Springer-Verlag, New York, 1995.
\newblock With a view toward algebraic geometry.

\bibitem{RD}
Robin Hartshorne.
\newblock {\em Residues and duality}.
\newblock Lecture notes of a seminar on the work of A. Grothendieck, given at
  Harvard 1963/64. With an appendix by P. Deligne. Lecture Notes in
  Mathematics, No. 20. Springer-Verlag, Berlin, 1966.

\bibitem{FM}
D.~Huybrechts.
\newblock {\em Fourier-{M}ukai transforms in algebraic geometry}.
\newblock Oxford Mathematical Monographs. The Clarendon Press Oxford University
  Press, Oxford, 2006.

\bibitem{K}
Horst Kn{\"o}rrer.
\newblock Cohen-{M}acaulay modules on hypersurface singularities. {I}.
\newblock {\em Invent. Math.}, 88(1):153--164, 1987.

\bibitem{AL}
Adrian Langer.
\newblock {$D$}-affinity and {F}robenius morphism on quadrics.
\newblock {\em Int. Math. Res. Not. IMRN}, (1):Art. ID rnm 145, 26, 2008.

\bibitem{ALE}
Adrian Langer.
\newblock Erratum to ,,{$D$}-affinity and {F}robenius morphism on quadrics''.
\newblock {\em Int. Math. Res. Not. IMRN}, 2010.

\bibitem{M}
Hideyuki Matsumura.
\newblock {\em Commutative algebra}, volume~56 of {\em Mathematics Lecture Note
  Series}.
\newblock Benjamin/Cummings Publishing Co., Inc., Reading, Mass., second
  edition, 1980.

\bibitem{R}
A.~P. Rao.
\newblock Mathematical instantons in characteristic two.
\newblock {\em Compositio Math.}, 119(2):169--184, 1999.

\bibitem{S1}
Alexander Samokhin.
\newblock On the {$D$}-affinity of quadrics in positive characteristic.
\newblock {\em C. R. Math. Acad. Sci. Paris}, 344(6):377--382, 2007.

\bibitem{S2}
Alexander Samokhin.
\newblock A vanishing theorem for differential operators in positive
  characteristic.
\newblock {\em Transform. Groups}, 15(1):227--242, 2010.

\bibitem{Yoshino}
Yuji Yoshino.
\newblock {\em Cohen-{M}acaulay modules over {C}ohen-{M}acaulay rings}, volume
  146 of {\em London Mathematical Society Lecture Note Series}.
\newblock Cambridge University Press, Cambridge, 1990.

\end{thebibliography}
\end{document}